\newtheorem{thm}{\bf{Theorem}}[section]
\newtheorem{note}[thm]{\bf{Note}}
\newtheorem{df}[thm]{\bf{Definition}}
\newtheorem{cor}[thm]{\bf{Corollary}}
\newtheorem{prop}[thm]{\bf{Proposition}}
\newtheorem{ex}[thm]{\bf{Example}}
\newcommand{\dom}{\operatorname{dom}}
\newcommand{\Id}{\operatorname{Id}}
\newcommand{\rank}{\operatorname{rank}}
\newcommand{\R}{\operatorname{\mathbb{R}}}
\newcommand{\N}{\operatorname{\mathbb{N}}}
\newcommand{\X}{\operatorname{\mathcal{X}}}
\newcommand{\y}{\operatorname{\mathcal{Y}}}
\newcommand{\m}{\operatorname{\mathcal{M}}}
\newcommand{\s}{\operatorname{\mathcal{S}}}
\newcommand{\A}{\operatorname{\mathcal{A}}}
\newcommand{\bbm}{\begin{bmatrix}}
\newcommand{\ebm}{\end{bmatrix}}
\newcommand{\ds}{\delta_f}
\newcommand{\dsf}{\delta_f}
\newcommand{\dnsf}{\delta_{\ns f}}
\newcommand{\ns}{\nabla_s}
\newcommand{\sh}{\nabla^2_s}
\newcommand{\h}{\mathbf{H}}
\newcommand{\hba}{\overline{\mathbf{H}}}
\newcommand{\hhat}{\widehat{\mathbf{H}}}
\newcommand{\zero}{\mathbf{0}}
\newcommand{\az}{\alpha_0}
\newcommand{\al}{\alpha}
\newcommand{\abar}{\overline{\alpha}}
\newcommand{\oneh}{\frac{1}{2}}
\newcommand{\Y}{\mathcal{Y}}
\newcommand{\one}{\mathbf{1}}
\newcommand\scalemath[2]{\scalebox{#1}{\mbox{\ensuremath{\displaystyle #2}}}}
\begin{document}
\title{Hessian approximations}
\author{Warren Hare\thanks{Department of Mathematics, University of British Columbia, Okanagan Campus, Kelowna, B.C. V1V 1V7, Canada. Research partially supported by NSERC of Canada Discovery Grant 2018-03865. warren.hare@ubc.ca, ORCID 0000-0002-4240-3903}\and Gabriel Jarry--Bolduc\thanks{Department of Mathematics, University of British Columbia, Okanagan Campus, Kelowna, B.C. V1V 1V7, Canada. Research partially supported by Natural Sciences and Engineering Research Council (NSERC) of Canada Discovery Grant 2018-03865. gabjarry@alumni.ubc.ca }\and Chayne Planiden\thanks{School of Mathematics and Applied Statistics, University of Wollongong, Wollongong, NSW, 2500, Australia. Research supported by University of Wollongong. chayne@uow.edu.au, ORCID 0000-0002-0412-8445}}
\maketitle\author

\begin{abstract}
This work introduces the {\em nested-set Hessian} approximation, a second-order approximation method that can be used in any derivative-free optimization routine that requires such information. It is built on the foundation of the generalized simplex gradient and proved to have an error bound that is on the order of the  maximal radius of the two sets used in its construction. We show that when the points used in the computation of the nested-set Hessian have a favourable structure, $(n+1)(n+2)/2$ function evaluations are sufficient to approximate the Hessian.  However, the nested-set Hessian also allows for evaluation sets with more points without negating the error analysis.  Two calculus-based approximation techniques of the Hessian are developed and some advantages of the same are demonstrated.
\end{abstract}

\noindent{\bf Keywords:} Hessian approximation; derivative-free optimization; order-$N$ accuracy; generalized simplex gradient; calculus identities.  

\noindent{\bf AMS subject classification:} primary 65K10, 90C56. 

\section{Introduction}

Derivative-free optimization (DFO) is the study of finding the minimum value and minimizers of a function without using gradients or higher-order derivative information \cite{audet2017derivative}. DFO is gaining in popularity in recent years and is useful in cases where gradients are unavailable, computationally expensive, or simply difficult to obtain \cite{audet2017derivative,conn2009introduction}.  DFO methods have been used in both smooth \cite{berghen2005condor,cocchi2018implicit,Gratton2017b,liuzzi2019trust,powell2003trust,powell2009bobyqa,shashaani2018astro,wild2013global} and nonsmooth \cite{audet2018algorithmic,bagirov2008discrete,hare2013derivative,hare2016proximal,larson2016manifold,MMSMW2017} optimization, and most commonly takes the form of either direct-search \cite{amaioua2018efficient, audet2008nonsmooth,audet2018mesh,BBN2018,Gratton2017direct} or  model-based \cite{hare2019derivative,liuzzi2019trust,Maggiar2018,verderio2017construction,wild2013global} methods, while some use a blend of both \cite{AudetIanniLeDigTribes2014,MR2457346}.

Model-based DFO methods use numerical analysis techniques to  approximate gradients and Hessians in a manner that has controllable error bounds. One example of a technique to approximate gradients is the  simplex gradient. This method consists of taking a set of $n+1$ sample points in $\R^n$ that are appropriately spaced, called a simplex, and using them to build a linear interpolation function that approximates the objective function locally, then calculating the gradient of the affine function. The simplex gradient has been shown to have a nicely bounded error, for functions whose gradients are locally Lipschitz continuous \cite{bortz1998simplex}. 

One recent line of research explores methods of improving or generalizing the simplex gradient for its use in DFO \cite{MR3935094,MR4074016,hare2020error,MR3348587}.  In \cite{MR4074016}, the simplex gradient was generalized so as not to require exactly $n+1$ points in $n$-dimensional space; an error-controlled approximation can now be found using any finite number of properly-spaced points. In \cite{hare2020error}, a similar approach was used to examine the generalized centred simplex gradient, an approximation that uses twice as many points, but results in an improvement on the error control from order-$1$ to order-$2$  \cite{hare2020discussion}. All three papers \cite{MR4074016,hare2020error,MR3348587} include a study of calculus rules as they can be applied to the numerical approximations that they examine.  In this work, we continue the development of these tools by introducing the \emph{nested-set Hessian}, thereby presenting the first such results for Hessian approximations.

The nested-set Hessian provides an accurate approximation of the Hessian of objective function $f:\R^n\to\R$ under reasonable assumptions. The technique uses a point of interest $x^0$ and two sets of directions $S$ and $T$ that total $n(n+1)$ points in the domain of $f$.  However, these points need not be distinct, so if we make careful choices, then $\frac{1}{2}(n+1)(n+2)$ function evaluations is sufficient to compute the nested-set Hessian. Defining $\Delta_S,\Delta_T$ as the radii of $S$ and $T$, respectively, we prove that if $S$ and $T$ are full row rank and the true Hessian $\nabla^2f$ exists and is Lipschitz continuous, then the nested-set Hessian is an accurate estimate of $\nabla^2f$ to within a multiple of $\Delta_u:=\max\{\Delta_S,\Delta_T\}$.  In the language of order-$N$ accuracy \cite{hare2020discussion}, the nested-set Hessian provides order-$1$ Hessian approximations (see Definition \ref{df:orderNaccuracy} herein). 

After developing the error analysis, we explore calculus rules for the nested-set Hessian, forming a product rule, quotient rule and power rule with error bounds. We provide two methods of obtaining these bounds, which we call the simplex calculus Hessian and the quadratic calculus Hessian. 

The remainder of this paper is organized as follows. Section \ref{sec:prelim} contains a description of notation and some needed definitions, including those of the generalized simplex gradient and the nested-set Hessian. In Section \ref{sec:qishc}, we establish the minimal poised set for nested-set Hessian computation and show that it is well-poised for quadratic interpolation. We construct the quadratic interpolation function in Section \ref{sec:qishc}. Section \ref{sec:error} develops the nested-set Hessian error bound, which we use in Section \ref{sec:calculus} to establish the product rule, quotient rule and power rule error bounds. Section \ref{sec:conc} contains concluding remarks and recommends areas of future research in this vein.

\section{Preliminaries}\label{sec:prelim}

Throughout this work, we use the standard notation found in \cite{rockwets}. The domain of a function $f$ is denoted by $\dom f$. The transpose of a matrix $A$ is denoted by $A^\top$. We work in finite-dimensional space $\R^n$ with inner product $x^\top y=\sum_{i=1}^nx_iy_i$ and induced norm $\|x\|=\sqrt{x^\top x}$. The identity matrix in $\R^{n \times n}$ is denoted by $\Id_n$. We use $e^i \in \R^n$ for $i \in \{1, 2, \dots, n\}$ to denote the standard unit basis vectors in $\R^n$, i.e.\ the $i$\textsuperscript{th} column of $\Id_n.$ The zero vector in $\R^n$ is denoted $\zero$ and the zero matrix in $\R^{n \times n}$ is denoted $\zero_{n \times n}$. The entry in the $i$\textsuperscript{th} row and $j$\textsuperscript{th} column of a matrix $A$ is denoted $A_{i,j}.$ Given a matrix $A \in \R^{n \times m},$ we use the induced matrix norm 
\begin{align*}
    \Vert A \Vert=\Vert A \Vert_2=\max \{ \Vert Ax\Vert_2 \, : \, \Vert x \Vert_2=1 \}
\end{align*}
and the Frobenius norm 
\begin{align*}
    \Vert A \Vert_F=\left (\sum_{i=1}^n \sum_{j=1}^m A_{i,j}^2 \right )^{\frac{1}{2}}.
\end{align*}
We denote by $B(x^0,\overline{\Delta})$ the open ball centred about $x^0$ with radius $\overline{\Delta}$. We define a quadratic function $Q: \R^n \to \R$ to be a function of the form $Q(x)=\alpha_0+\alpha^\top x+\frac{1}{2}x^\top \h x$ where $\alpha_0 \in \R, \alpha \in \R^n$ and $\h=\h^\top \in \R^{n \times n}.$ An affine function $\mathcal{L}:\R^n \to \R$ is defined as any function that can be written in the form $\mathcal{L}(x)=\alpha^\top x+\alpha_0$ where $\alpha \in \R^n$ and $\alpha_0 \in \R.$ Therefore, affine functions and constant functions $C(x)=\alpha_0$ are also considered quadratic functions. Next, we introduce fundamental definitions and notation that will be used throughout this paper.
\begin{df}[Poised for quadratic interpolation]\emph{\cite[Definition 9.8]{audet2017derivative}} \label{def:poisedqi}
The set of distinct points $\Y=\{ y^0, y^1, \dots, y^m \} \subset \R^n$ with $m=\frac{1}{2}(n+1)(n+2)-1$ is poised for quadratic interpolation if the system
\begin{align} 
    \alpha_0+\alpha^\top y^i +\frac{1}{2} (y^i)^\top \h y^i=\zero, \quad i \in \{0, 1, \dots, m\},  \label{eq:poisedqisys}
\end{align}
has a unique solution for $\alpha_0 \in \R, \alpha \in \R^n,$ and $\h=\h^\top \in \R^{n \times n}.$
\end{df}
\begin{df}[Quadratic interpolation function]\emph{\cite[Definition 9.9]{audet2017derivative}} \label{def:quadinterpolationfunc}
Let $f:\dom f\subseteq \R^n \to \R$ and let $\Y=\{y^0, y^1, \dots, y^m\} \subset \dom f$ with $m=\frac{1}{2}(n+1)(n+2)-1$ be poised for quadratic interpolation. Then the quadratic interpolation function of $f$ over $\Y$ is
\begin{align*}
    Q_f(\y)(x)&=\alpha_0+\alpha^\top x+\frac{1}{2}x^\top \h x,
\end{align*}
where $(\alpha_0, \alpha, \h=\h^\top)$ is the unique solution to
\begin{align*}
    \alpha_0+\alpha^\top y^i+\frac{1}{2}(y^i)^\top \h y^i&= f(y^i), \quad i \in \{0, 1, \dots, m\}.
\end{align*}
\end{df}

In the next definition, we introduce key notation used in the construction of the nested-set Hessian.  Within, we write a set of vectors in matrix form, by which we mean that each column of the matrix is a vector in the set.

\begin{df}[Hessian notation]
Let $f: \dom f \subseteq \R^n \to \R$ and let $x^0 \in \dom f$ be the \emph{point of interest}.  Let  
\begin{align*}
    S&=\bbm s^1&s^2&\cdots&s^m\ebm \in \R^{n \times m} ~~\mbox{and}\\
    T&=\bbm t^1&t^2&\cdots&t^k \ebm \in \R^{n \times k}
\end{align*}
be two sets of directions  contained  in $\R^n$, written in matrix form  such that $x^0+s^i,$ $x^0+t^j, x^0+s^i+t^j \in \dom f$ for all $i \in \{1, 2, \dots, m\}$ and  for all $j \in \{1, 2, \dots, k\}$. Define 
    $$\Delta_S=\max\limits_{i \,\in\{1,\ldots,m\}}\|s^i \|, \quad \Delta_T=\max\limits_{j \,\in\{1,\ldots,k\}}\|t^j\|,$$
and
\begin{align*}
\dsf\left (x^0;T \right )&=\left[\begin{array}{c}f(x^0+t^1)-f(x^0)\\f(x^0+t^2)-f(x^0) \\\vdots\\f(x^0+t^k)-f(x^0)\end{array}\right]\in\R^k.
\end{align*}
\end{df}

Recall that for nonsquare matrices, a generalization of the matrix inverse is the pseudoinverse. The most well-known type of matrix pseudoinverse is the Moore--Penrose pseudoinverse.

\begin{df}[Moore--Penrose pseudoinverse] \label{def:mpinverse}
Let $A\in\R^{n\times m}$. The \emph{Moore--Penrose pseudoinverse} of $A$, denoted by $A^\dagger$, is the unique matrix in $\R^{m \times n}$ that satisfies the following four equations:
\begin{align*}
AA^\dagger A&=A,\\A^\dagger AA^\dagger&=A^\dagger,\\(AA^\dagger)^\top&=AA^\dagger,\\(A^\dagger A)^\top&=A^\dagger A.
\end{align*}
\end{df}

\noindent Note that given $A \in \R^{n \times m},$ there exists a unique Moore--Penrose pseudoinverse $A^\dagger\in\R^{m\times n}.$ The following two properties hold.
\begin{enumerate}[(i)]
\item If $A$ has full column rank $m$, then $A^\dagger$ is a left-inverse of $A$, that is $A^\dagger A=\Id_m.$ In this case, $A^\dagger=(A^\top A)^{-1} A^\top.$ 
\item If $A$ has full row rank $n$, then $A^\dagger$ is a right-inverse of $A$, that is $AA^\dagger=\Id_n.$ In this case, $A^\dagger=A^\top (A A^\top)^{-1}.$
\end{enumerate}

Before introducing the nested-set Hessian, it is valuable to recall the definition of the generalized simplex gradient and its corresponding error bound.

\begin{df}[Generalized simplex gradient] \emph{\cite[Definition 2]{hare2020error}} Let $f:\dom f \subseteq \R^n\to\R$.  Let $x^0 \in \dom f$ be the point of interest.  Let $T=\bbm t^1&t^2&\cdots&t^k\ebm \in \R^{n \times k}$ with $x^0+t^j \in \dom f$ for all $j\in\{1,2,\ldots,k\}$. The \emph{generalized simplex gradient} of $f$ at $x^0$ over  $T$ is denoted by $\ns f(x^0;T)$ and defined by
\begin{equation*}\label{eq:simplex}\ns f(x^0;T)=(T^\top)^\dagger\ds (x^0;T).\end{equation*}
\end{df}

The Moore--Penrose pseudoinverse allows for $T$ to contain any number of columns, instead of being restricted to $k=n$. When $T$ is in $\R^{n \times n}$ and $\rank T=n$, then the Moore--Penrose pseudoinverse of $S^\top$ is just the inverse of $S^\top$ and we recover the definition of the simplex gradient \cite[Definition 9.5]{audet2017derivative}.

\begin{prop}[Error bound for generalized simplex gradient]\emph{\cite[Proposition 3]{hare2020error}}\label{prop:GSGerror}
 Let $T=\bbm t^1&t^2&\cdots&t^k\ebm \in \R^{n \times k}$ have full row rank. Let $f:\dom f \subseteq \R^n\to\R$ be $\mathcal{C}^{2}$ on $B(x^0;\overline{\Delta})$ where $x^0 \in \dom f$ is the point of interest and $\overline{\Delta}>\Delta_T$. Assume $x^0+t^j \in \dom f$ for all $j\in\{1,2,\ldots,k\}$. Denote by $L_{\nabla f}$ the Lipschitz constant of $\nabla f$ on $B(x^0,\overline{\Delta}).$ 
 Then
\begin{align}\label{eq:ebgsg}
\|\nabla_s  f(x^0;T)-\nabla f(x^0)\| &\leq \frac{\sqrt{k}}{2}L_{\nabla f}  \left \Vert (\widehat{T}^\top)^\dagger \right \Vert \Delta_T,
\end{align}
where $\widehat{T}=T/\Delta_T.$
\end{prop}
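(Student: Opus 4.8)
The plan is to expand each entry of $\ds(x^0;T)$ to first order with a Lipschitz-controlled remainder, rewrite the outcome in matrix form, and then exploit the left-inverse property of the Moore--Penrose pseudoinverse (property (i) above) together with the way the pseudoinverse rescales under $T\mapsto\widehat{T}=T/\Delta_T$. First I would fix $j\in\{1,\dots,k\}$ and note that the segment $\{x^0+\tau t^j:\tau\in[0,1]\}$ lies in $B(x^0;\overline{\Delta})$, since $\|\tau t^j\|\le\|t^j\|\le\Delta_T<\overline{\Delta}$. As $f$ is $\mathcal{C}^2$ there, the fundamental theorem of calculus gives $f(x^0+t^j)-f(x^0)=\int_0^1\nabla f(x^0+\tau t^j)^\top t^j\,d\tau$, hence
\[
f(x^0+t^j)-f(x^0)-\nabla f(x^0)^\top t^j=\int_0^1\left(\nabla f(x^0+\tau t^j)-\nabla f(x^0)\right)^\top t^j\,d\tau .
\]
Using $\|\nabla f(x^0+\tau t^j)-\nabla f(x^0)\|\le L_{\nabla f}\,\tau\,\|t^j\|$ and the Cauchy--Schwarz inequality, the absolute value of the right-hand side is at most $\tfrac{1}{2}L_{\nabla f}\|t^j\|^2\le\tfrac{1}{2}L_{\nabla f}\Delta_T^2$.

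Next I would stack these $k$ identities into matrix form: $\ds(x^0;T)=T^\top\nabla f(x^0)+E$, where $E\in\R^k$ satisfies $|E_j|\le\tfrac{1}{2}L_{\nabla f}\Delta_T^2$ for every $j$, so that $\|E\|\le\tfrac{\sqrt{k}}{2}L_{\nabla f}\Delta_T^2$. Since $T$ has full row rank, $T^\top$ has full column rank, and property (i) of the pseudoinverse gives $(T^\top)^\dagger T^\top=\Id_n$; therefore
\[
\ns f(x^0;T)=(T^\top)^\dagger\ds(x^0;T)=(T^\top)^\dagger T^\top\nabla f(x^0)+(T^\top)^\dagger E=\nabla f(x^0)+(T^\top)^\dagger E .
\]
Submultiplicativity of the induced matrix norm then yields $\|\ns f(x^0;T)-\nabla f(x^0)\|\le\|(T^\top)^\dagger\|\,\|E\|\le\tfrac{\sqrt{k}}{2}L_{\nabla f}\|(T^\top)^\dagger\|\Delta_T^2$.

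Finally, because $\widehat{T}^\top=T^\top/\Delta_T$ and the Moore--Penrose pseudoinverse of a nonzero scalar multiple $cA$ is $c^{-1}A^\dagger$ (which one checks directly against the four defining equations of Definition \ref{def:mpinverse}), we have $(\widehat{T}^\top)^\dagger=\Delta_T(T^\top)^\dagger$, i.e.\ $\|(T^\top)^\dagger\|=\|(\widehat{T}^\top)^\dagger\|/\Delta_T$. Substituting this into the previous estimate cancels one power of $\Delta_T$ and produces exactly \eqref{eq:ebgsg}. I expect no genuine obstacle: the only place where care is needed is the remainder estimate, where the bound must be kept uniform in $j$ at order $\Delta_T^2$ (rather than the weaker $\|t^j\|\Delta_T$), and the one structural observation that matters is that the normalization $\widehat{T}=T/\Delta_T$ is precisely what converts the crude bound involving $\|(T^\top)^\dagger\|\Delta_T^2$ into the scale-invariant quantity $\|(\widehat{T}^\top)^\dagger\|\Delta_T$.
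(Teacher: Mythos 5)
Your proof is correct: the first-order Taylor/integral expansion with the Lipschitz bound on $\nabla f$, the stacking into $\ds(x^0;T)=T^\top\nabla f(x^0)+E$ with $\|E\|\le\tfrac{\sqrt{k}}{2}L_{\nabla f}\Delta_T^2$, the left-inverse identity $(T^\top)^\dagger T^\top=\Id_n$ from full row rank, and the rescaling $(\widehat{T}^\top)^\dagger=\Delta_T(T^\top)^\dagger$ together yield exactly \eqref{eq:ebgsg}. The paper itself imports this proposition from \cite[Proposition 3]{hare2020error} without reproving it, and your argument is essentially the standard one used there, so there is nothing further to reconcile.
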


 The error bound presented in Proposition \ref{prop:GSGerror} requires $T$ to have full row rank, and therefore covers the {\em determined} ($k=n$) and {\em overdetermined} ($k>n$) cases.  The underdetermined case ($k<n$ and $\rank T=k$) has also been studied \cite {MR3348587}, but is not used in this paper.

We now introduce the key definition for this paper, the nested-set Hessian.  It requires two finite sets of directions $S$ and $T$, and a point of interest, i.e., the point where the Hessian is approximated.  Similar to the generalized simplex gradient, it involves the Moore--Penrose pseudoinverse of $S$  and a difference matrix.  In this case, the difference matrix consists of the difference between generalized simplex gradients.

\begin{df}[Nested-set Hessian]\label{def:gsh}
Let $f:\dom f \subseteq \R^n \to \R$ and let $x^0 \in \dom f$ be the point of interest.  Let $S=\bbm s^1&s^2&\cdots &s^m \ebm  \in \R^{n \times m}$ and $ T=\bbm t^1&t^2&\cdots&t^k \ebm \in \R^{n \times k}$ with $x^0+s^i, x^0+t^j, x^0+s^i+t^j \in \dom f$ for all $i \in \{1, 2, \dots, m\}$ and for all $j \in \{1, 2, \dots, k\}$. The \emph{ nested-set Hessian}  of $f$ at $x^0$ over $S$ and $T$ is denoted by $\sh f(x^0;S,T)$ and defined by
\begin{equation*}\label{eq:simplexhess}\sh f(x^0;S,T)=(S^\top)^\dagger\dnsf (x^0;S,T),\end{equation*}
where
\begin{equation*}
    \dnsf(x^0;S,T)=\left[\begin{array}{c}(\ns f(x^0+s^1;T)-\ns f(x^0;T))^\top\\(\ns f(x^0+s^2;T)-\ns f(x^0;T))^\top\\\vdots\\(\ns f(x^0+s^m;T)-\ns f(x^0;T))^\top\end{array}\right]\in \R^{m \times n}.
\end{equation*}
\end{df}
Next we recall the definition of \emph{order-N Hessian accuracy}, as it is used in this article several times to describe the quality of the Hessian  approximation techniques developed.
\begin{df}[Order-N  Hessian accuracy] \label{df:orderNaccuracy}
Given $f \in \mathcal{C}^2$, $x^0 \in \dom f$ and $\overline{\Delta}>0,$ we say that $\{\tilde f_\Delta\}_{\Delta \in (0,\overline{\Delta}]}$ is a class of models of $f$ at $x^0$  parameterized by $\Delta$ that provides order-N Hessian accuracy at $x^0$ if there exsits a scalar $\kappa(x^0)$ such that, given any $\Delta \in (0, \overline{\Delta}],$ the model $\tilde{f}_\Delta$ satisfies
$$\Vert \nabla^2 f(x^0)-\nabla^2 \tilde{f}_\Delta (x^0)\Vert\leq \kappa(x^0)\Delta^N.$$
\end{df}
\begin{df}
The set of all distinct points utilized in the computation of $\nabla_s^2 f(x^0;S,T)$ is said to be the set for {\em nested-set Hessian computation} (NSHC) and is denoted $\s(x^0;S,T).$ 
\end{df} 

Note that $\s(x^0;S,T)$ contains at most $(m+1)k$ distinct points, but can contain fewer points if some of them overlap. 

From Definition \ref{def:gsh}, we see that the only condition for $\nabla_s^2 f(x^0;S,T)$  to be well-defined is that all the points used in the computation of the matrix $\delta_{\nabla_s f}(x^0;S,T)$ are in $\dom f.$  In the next section, we investigate the possibility of choosing $S$ and $T$ so that $\s(x^0;S,T)$ contains a minimal number of points, while providing a good approximation of the Hessian. Reducing the number of distinct points in  $\s(x^0;S,T)$ is extremely valuable, as it will decrease the number of distinct function evaluations necessary to compute $\nabla^2_s f(x^0;S,T).$

\section{Minimal poised set for NSHC and quadratic interpolation}\label{sec:qishc}

In this section, we show that if  the sets $S$ and $T$ have a specific structure, then the number of distinct function evaluations necessary to compute the nested-set Hessian is $(n+1)(n+2)/2,$ i.e., $\s(x^0;S,T)$ contains exactly $(n+1)(n+2)/2$ distinct points.  We then explore some results that occur when such a structure is used.
We begin with the definition of \emph{minimal poised set} for NSHC. 

\begin{df}[Minimal poised set for NSHC]\label{df:minimalNSHC}
Let  $x^0 \in \R^n$ be the point of interest.  Let $S=\bbm s^1&s^2&\cdots&s^n\ebm \in \R^{n \times n}$ and $T = \bbm t^1 & t^2 & \cdots & t^n \ebm \in \R^{n \times n}$.  We say that $\s(x^0;S,T)$ is a \emph{minimal poised set for NSHC} at $x^0$ if and only if $S$ and $T$ are full rank and $\s(x^0;S,T)$ contains exactly $(n+1)(n+2)/2$ distinct points.
\end{df}

This definition requires $S$ and $T$ to have exactly $n$ columns and be full rank.  This implies that $S$ and $T$ are the minimal size to ensure that Proposition \ref{prop:GSGerror} applies to the simplex gradients constructed in NSHC.  In Section \ref{sec:error}, we shall see that the nested-set Hessian computed over a minimal poised set also satisfies the assumptions of the error bound in Theorem \ref{prop:ebsimplexhessian}
We next show that it is possible to create a minimal poised set for NSHC.

\begin{prop} \label{prop:reducingfe}
Let $x^0$ be the point of interest.  Let $S=\bbm s^1&s^2&\cdots&s^n\ebm \in \R^{n \times n}$.  Define the set $U_k$ for each index $k\in\{0,1,\ldots,n\}$ as    $$U_0=S$$
and 
    $$U_k=\bbm s^1-s^k&s^2-s^k&\cdots&s^{k-1}-s^k&-s^k&s^{k+1}-s^k&\cdots&s^n-s^k\ebm \in \R^{n \times n},k\neq0.$$  Then for each $k$, $|\s(x^0;S,U_k)| \leq (n+1)(n+2)/2$.  Moreover, if $S$ has full rank, then $|\s(x^0;S,U_k)| = (n+1)(n+2)/2$.
\end{prop}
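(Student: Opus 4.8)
The plan is to count the distinct points appearing in $\s(x^0;S,U_k)$ by understanding exactly which translates of $x^0$ are used in the construction of $\sh f(x^0;S,U_k)$. Recall from Definition~\ref{def:gsh} that the nested-set Hessian over $S$ and $U_k$ uses the generalized simplex gradients $\ns f(x^0+s^i;U_k)$ for $i\in\{1,\ldots,n\}$ together with $\ns f(x^0;U_k)$, and each such generalized simplex gradient $\ns f(p;U_k)$ requires the function values $f(p)$ and $f(p+u)$ for every column $u$ of $U_k$. So the full list of evaluation points is
$$\{x^0\}\cup\{x^0+s^i: i\in\{1,\ldots,n\}\}\cup\{x^0+s^i+u^j_k : i\in\{1,\ldots,n\},\ j\in\{1,\ldots,n\}\}\cup\{x^0+u^j_k: j\in\{1,\ldots,n\}\},$$
where $u^j_k$ denotes the $j$-th column of $U_k$. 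The key observation is that for $k\neq 0$ the columns of $U_k$ are chosen precisely so that the "outer" points $x^0+s^i+u^j_k$ collapse onto points of the form $x^0+s^j$ (or $x^0$): indeed $s^i+(s^j-s^k)$ is not quite of that form, so I will need to be careful about the exact indexing — the natural pairing is that adding column $u^j_k=s^j-s^k$ to the base point $x^0+s^k$ gives $x^0+s^j$, so the relevant simplex gradient to examine is $\ns f(x^0+s^k;U_k)$, whose evaluation points are exactly $\{x^0+s^j : j\in\{1,\ldots,n\}\}\cup\{x^0\}$. I would first verify this collapsing phenomenon carefully and identify, for general $i$, the set $\{x^0+s^i+u^j_k : j\}$; in general these are the points $x^0+s^i+s^j-s^k$, which are new points unless $i=k$ or $j=k$.

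Next I would organize the count. Write $P_0=\{x^0\}\cup\{x^0+s^i:i=1,\ldots,n\}$ for the $n+1$ "first-order" points (these also cover all the points $x^0+u^j_k$ when we include $j=k$ giving $x^0-s^k$... here again I must check that $-s^k$ being a column of $U_k$ forces $x^0-s^k\in\dom f$, which is guaranteed since $x^0+u^j_k\in\dom f$ by hypothesis). Then the additional "second-order" points are among $\{x^0+s^i+s^j-s^k : i,j\in\{1,\ldots,n\}\}$. Using symmetry $s^i+s^j-s^k = s^j+s^i-s^k$, the number of distinct such sums is at most the number of unordered pairs $\{i,j\}$, i.e. $\binom{n}{2}+n = n(n+1)/2$, minus those that already lie in $P_0$. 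Adding $n+1$ (for $P_0$) and $n(n+1)/2$ gives $(n+1) + n(n+1)/2 = (n+1)(n+2)/2$; the bookkeeping about which second-order points coincide with first-order ones, and whether that makes the count strictly smaller, has to be reconciled with the claimed inequality $\leq (n+1)(n+2)/2$, so I would lay out the worst case explicitly.

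For the "moreover" part, assuming $S$ has full rank, I must show no further collisions occur, i.e. that the points $x^0+s^i+s^j-s^k$ (over unordered pairs $\{i,j\}$) are pairwise distinct and distinct from the points of $P_0$. This reduces to an affine-independence statement: two sums $s^i+s^j$ and $s^{i'}+s^{j'}$ coincide (mod $s^k$, but $s^k$ is fixed so it cancels) only if $\{i,j\}=\{i',j'\}$, and $s^i+s^j-s^k = s^\ell$ or $=\zero$ only in degenerate configurations — all of which are ruled out because the $n$ vectors $s^1,\ldots,s^n$ are linearly independent, hence no nontrivial vanishing linear combination with these small integer coefficients is possible. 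I would make this precise by writing any claimed coincidence as a linear relation $\sum c_i s^i = \zero$ with coefficients in $\{-2,-1,0,1,2\}$ and showing the only solution consistent with a genuine coincidence of index sets is the trivial one; linear independence of $\{s^i\}$ forces all $c_i=0$. The main obstacle is precisely this last combinatorial-linear-algebra step: carefully enumerating the cases of how two (first- or second-order) points could coincide and checking each gives a contradiction with $\rank S = n$, while making sure the indexing around the special column $-s^k$ (the "$j=k$" slot in $U_k$) is handled correctly rather than double-counted.
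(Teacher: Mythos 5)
Your overall strategy---enumerate the evaluation points, observe that the gradient based at $x^0+s^k$ collapses onto $\{x^0\}\cup(x^0+S)$, then rule out further coincidences in the full-rank case via a linear relation $\sum_i c_i s^i=\zero$ with small integer coefficients---is the same as the paper's. However, there is a concrete error in your bookkeeping for $k\neq 0$: the parenthetical claim that $P_0=\{x^0\}\cup\{x^0+s^i\}$ ``also covers all the points $x^0+u^j_k$'' is false. The gradient $\nabla_s f(x^0;U_k)$ evaluates $f$ at the $n$ points $x^0-s^k$ and $x^0+s^j-s^k$ ($j\neq k$), and these lie neither in $P_0$ nor in your second-order family $\{x^0+s^i+s^j-s^k : i,j\in\{1,\ldots,n\}\}$ (membership there would force $s^i+s^j=\zero$ or $s^i+s^{j'}=s^j$, impossible when $S$ has full rank). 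Concretely, for $n=2$, $k=2$, $S=\Id_2$ (the paper's Figure 1), your union $P_0\cup\{x^0+s^i+s^j-s^k\}$ contains only $4$ points, while $\s(x^0;S,U_2)$ has $6$; the points $(0,-1)$ and $(1,-1)$ are precisely the ones your decomposition drops.

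This flaw affects both halves of the statement. Your arithmetic $(n+1)+n(n+1)/2=(n+1)(n+2)/2$ lands on the right number only because two errors of size $n$ cancel: the second-order family still contains the $n$ unordered pairs with $i=k$ or $j=k$, whose points already lie in $P_0$ (and which you say must be subtracted), while the $n$ points $x^0+u^j_k$ are missing altogether. If you perform the subtraction you announce, your total becomes $(n+1)+n(n-1)/2$, which falls short of $(n+1)(n+2)/2$ by exactly $n$, so the ``moreover'' equality cannot be recovered from your decomposition; and without adding the missing points, the claimed inclusion of all evaluation points in $P_0\cup\{x^0+s^i+s^j-s^k\}$ fails, so even the upper bound is not rigorously established. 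The repair is the enumeration the paper uses: $\{x^0\}\cup(x^0+S)\cup\{x^0-s^k\}\cup\{x^0+s^i-s^k\}_{i\neq k}\cup\{x^0+s^i+s^j-s^k\}_{i\le j,\ i,j\neq k}$, which has at most $1+n+1+(n-1)+n(n-1)/2=(n+1)(n+2)/2$ points, after which your linear-independence argument (applied to coincidences between \emph{all five} families, not just $P_0$ and the second-order sums) gives the equality for full-rank $S$. Finally, the $k=0$ case needs its own (easier) enumeration, $\{x^0\}\cup\{x^0+s^i\}\cup\{x^0+2s^i\}\cup\{x^0+s^i+s^j\}_{i\neq j}$, which your sketch does not treat.
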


\begin{proof} Without loss of generality, let $x^0=\zero$. First, suppose $k\in\{1, 2, \dots, n\}.$  For arbitrary function $f$, consider the matrix $\delta_{\nabla_s f}(x^0;S,U_k)$. The computation of $\nabla_s f(x^0;U_k)$ evaluates $f$ at the points  
    $$\begin{array}{l}
    ~~~\{\zero, (s^1-s^k), \dots,  (s^{k-1}-s^k), -s^k,  (s^{k+1}-s^k),\dots, (s^n-s^k)\}\\
    =\{\zero\}\cup\{-s^k\}\cup\left\{s^{i}-s^k \right\}_{i \neq k}.
    \end{array}$$
For $i \neq k$, the computation of $\nabla_s f(x^0+s^i;U_k)$ evaluates $f$ at the points 
    $$\begin{array}{l}
    ~~~\{s^i, s^i+(s^1-s^k), \dots, s^i+(s^{k-1}-s^k), s^i-s^k,  s^i+(s^{k+1}-s^k),\dots, s^i+(s^n-s^k)\}\\
    =\{s^i\}\cup\{s^i-s^k\}\cup\{s^i + s^j - s^k\}_{j\neq k}.
    \end{array}$$
The computation of $\nabla_s f(x^0+s^k;U_k)$ evaluates $f$ at the points 
   $$\begin{array}{l}
    ~~~\{s^k, s^k+(s^1-s^k), \dots, s^k+(s^{k-1}-s^k), s^k-s^k,  s^k+(s^{k+1}-s^k),\dots, s^k+(s^n-s^k)\}\\
    =\{s^k\}\cup\{\zero\}\cup\{s^i\}_{i\neq k} =\{\zero\}\cup S.
    \end{array}$$
Thus, $f$ is evaluated at the points  
    \begin{equation}\label{eq:listofpoints}\begin{array}{l} ~~\bigg(\{\zero\}\cup\{-s^k\}\cup\left\{s^{i}-s^k \right\}_{i \neq k}\bigg) 
    ~ \cup ~
    \bigg(\{s^i\}\cup\{s^i-s^k\}\cup\{s^i + s^j - s^k\}_{j\neq k}\bigg)
    ~ \cup ~
    \bigg(  \{\zero\}\cup S \bigg)\\
    = \{\zero\} \cup S \cup\{-s^k\}\cup\left\{s^{i}-s^k \right\}_{i \neq k} \cup  (\{s^i + s^j - s^k\}_{i=1}^n)_{j>i}^n.
    \end{array}\end{equation}
This is at most $1+n+1+(n-1)+(n)(n-1)/2 = (n+1)(n+2)/2$ points.

Now, suppose $k=0.$ Using a similar process to the above, we find that $f$ is evaluated at the points   
\begin{equation}\label{eq:listofpointsu0}\begin{array}{l} \bigg(\{\zero\}\cup\left\{s^{i} \right \}_{i=1}^n \bigg) 
    ~ \cup ~
    \bigg(\{2s^i\}_{i=1}^n\cup\{s^i+s^j\}_{i \neq j}\bigg).
    \end{array}\end{equation}
This is at most $(n+1)(n+2)/2$ points.

Finally, if $S$ is full rank, then the four sets in  \eqref{eq:listofpoints} and the four sets in \eqref{eq:listofpointsu0} are disjoint, so we have exactly $ (n+1)(n+2)/2$ function evaluations.\qedhere
\end{proof}

Using $S=\Id_n$ in Proposition \ref{prop:reducingfe}, we can create $n+1$ canonical minimal poised sets for NSHC.
\begin{df}[$k^{\mbox{th}}$-canonical minimal poised set for NSHC] \label{def:canminset}
Let $x^0 \in \R^n$ be the point of interest.  Let $S=\Id_n$. Fix $k \in \{0, 1, \dots, n\}$. Let  
    $$E_0= \Id_n$$
and
    $$E_k=\bbm e^1-e^k&e^2-e^k&\cdots&e^{k-1}-e^k&-e^k&e^{k+1}-e^k&\cdots&e^n-e^k\ebm, k\neq0.$$ 
Then $\s(x^0;\Id_n,E_k)$ is  called the {\em $k^{\mbox{th}}$-canonical minimal poised set for NSHC} at $x^0$. 
\end{df}

From Proposition \ref{prop:reducingfe} and the fact that any matrix $E_k$ is full rank in Definition \ref{def:canminset}, the $k^{\mbox{th}}$-canonical minimal poised set for NSHC is indeed a minimal poised set for NSHC. Henceforth, we use the notation $\m(x^0;S,U_k)$ to denote a minimal poised set for NSHC at $x^0$ that takes the form constructed in Proposition \ref{prop:reducingfe}.

Note that the order of the directions in $S$ and $T$ are arbitrary.  Thus, it is immediately clear that if $\s(x^0;S,T)$ is a minimal poised set for NSHC and $P_1, P_2 \in \R^{n\times n}$ are permutation matrices, then  $\s(x^0;SP_1,TP_2)$ is also a minimal poised set for NSHC at $x^0$.  The next proposition expands this idea and demonstrates how to construct minimal poised sets for NSHC. 

\begin{prop}\label{prop:invmatrix}
Let $x^0 \in \R^n$ be the point of interest. Let $S,T\in\R^{n \times n}$.  Let $N \in \R^{n \times n}$ be an invertible matrix and $P_1, P_2\in \R^{n \times n}$ be permutation matrices.  Define $\overline{S}=NSP_1$ and $\overline{T}=N T P_2.$ Then  $\s(x^0;S,T)$ is a minimal poised set for NSHC at $x^0$ if and only if $\s(x^0;\overline{S},\overline{T})$ is a minimal poised set for NSHC at $x^0$.
\end{prop}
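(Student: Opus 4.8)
The plan is to show that the set-size count $|\s(x^0;S,T)|$ is invariant under the transformation $(S,T)\mapsto(\OU,\OU)=(NSP_1,NTP_2)$, and that full rank is preserved, so that the two conditions defining "minimal poised set for NSHC" are simultaneously satisfied or not. Since $P_1$ and $P_2$ only permute the columns of $S$ and $T$ — and the remark preceding the proposition already records that column permutations preserve the minimal-poised property — I would first reduce to the case $P_1=P_2=\Id_n$, dealing only with the invertible left-multiplier $N$. Full rank is immediately preserved: $\rank(NSP_1)=\rank S$ and $\rank(NTP_2)=\rank T$ since $N$ is invertible and $P_1,P_2$ are permutation (hence invertible) matrices. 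So the real content is the claim $|\s(x^0;NS,NT)|=|\s(x^0;S,T)|$.

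The key step is to describe $\s(x^0;S,T)$ explicitly as a finite set of points of the form $x^0 + (\text{linear combination of columns of } S \text{ and } T)$, and to observe that all such points are generated by a fixed, $(S,T)$-independent list of coefficient patterns. Concretely, from Definition \ref{def:gsh} and the definition of the generalized simplex gradient, the points evaluated are: $x^0$; $x^0+t^j$ for each $j$; $x^0+s^i$ for each $i$; and $x^0+s^i+t^j$ for each $i,j$. Thus
\begin{align*}
\s(x^0;S,T)=\{x^0\}\cup\{x^0+t^j\}_{j=1}^{k}\cup\{x^0+s^i\}_{i=1}^{m}\cup\{x^0+s^i+t^j\}_{i,j}.
\end{align*}
Now I would set up the affine bijection $\Phi:\R^n\to\R^n$, $\Phi(y)=N(y-x^0)+x^0$, which is a bijection because $N$ is invertible. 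Since $\Phi(x^0+s^i)=x^0+Ns^i$, $\Phi(x^0+t^j)=x^0+Nt^j$, and $\Phi(x^0+s^i+t^j)=x^0+Ns^i+Nt^j$, and since $NS$ has columns $Ns^i$ while $NT$ has columns $Nt^j$, the description above gives $\Phi\bigl(\s(x^0;S,T)\bigr)=\s(x^0;NS,NT)$. A bijection maps a set of cardinality $c$ to a set of cardinality $c$, so $|\s(x^0;NS,NT)|=|\s(x^0;S,T)|$. Reincorporating the permutations $P_1,P_2$ (which merely relabel the $s^i$ and $t^j$ and hence do not change the underlying set of points at all), we conclude $|\s(x^0;\OU,\OU)|=|\s(x^0;S,T)|$. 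Combining the cardinality invariance with the rank invariance yields the "if and only if" for both sets being minimal poised sets for NSHC.

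The main obstacle — really the only point requiring care — is verifying that the list of evaluation points making up $\s(x^0;S,T)$ is exactly the four families above and contains no additional points that might transform differently; this is a direct unwinding of Definition \ref{def:gsh} together with the definition of $\ns f(\,\cdot\,;T)=(T^\top)^\dagger\ds(\,\cdot\,;T)$, noting that the pseudoinverse factor does not introduce any new function evaluations, only linear combinations of the values $f$ already computed. One should also note in passing that although $\Phi$ maps the full list of evaluated points correctly, the \emph{pattern of coincidences} among those points is what determines the cardinality, and since $\Phi$ is injective it preserves exactly which points coincide — so no subtlety about "accidental" collisions arises. Everything else is bookkeeping with permutation matrices.
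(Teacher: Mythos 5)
Your proposal is correct and amounts to the natural filling-in of the argument the paper dismisses with ``follows trivially from properties of matrices'': rank is preserved by the invertible factors $N,P_1,P_2$, and the affine bijection $y\mapsto N(y-x^0)+x^0$ carries the evaluation-point set $\{x^0\}\cup\{x^0+s^i\}\cup\{x^0+t^j\}\cup\{x^0+s^i+t^j\}$ onto the corresponding set for $(NS,NT)$, preserving its cardinality. This is the same (intended) approach, just written out in detail, including the correct unwinding of Definition \ref{def:gsh} to identify the evaluation points.
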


\begin{proof} The proof follows trivially from properties of matrices.\end{proof}

It follows that  $\s(x^0;S,T)$ is a minimal poised set for NSHC at $x^0$ if and only if the set $\s(x^0;\beta SP_1, \beta T P_2)$ is a minimal poised set for NSHC at $x^0$, where $\beta$ is a nonzero scalar and $P_1, P_2$ are permutaion matrices.
\begin{ex}
Let $x^0=(0,0)$. The $2^{\mbox{nd}}$-canonical minimal poised set for NSHC in $\R^2$ contains the points $(0,-1),(0,0),(0,1),(1,-1),(1,0)$ and $(2,-1)$.  In this case, $S = \{e^1, e^2\}$ and $T=\{(e^1-e^2), -e^2\}$.  Figure \ref{fig:canset} illustrates this set.  

\begin{figure}[ht]
\caption{The $2^{\mbox{nd}}$-canonical minimal poised set for NSHC at $x^0$ in $\R^2$.} \label{fig:canset}
\includegraphics[scale=0.5]{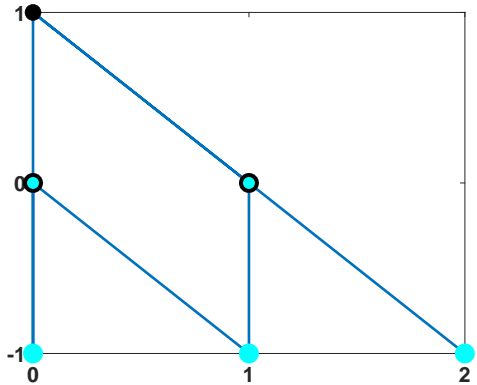}
\centering
\end{figure}

The points in $\{(0,0)\} \cup \{(0,0) + S\} = \{0, e^1, e^2\}$ are represented with solid black borders.  These are the base points where simplex gradients will be computed.  The lines represent the vectors corresponding to $T$ emanate from $\{(0,0)\} \cup \{(0,0) + S\}$.  The points in $\left(\{(0,0)\} \cup \{(0,0) + S\}\right) + T$ are represented with cyan cores.  These are the points used to construct the simplex gradients.   Notice the points $(0,0)$ and $(1,0)$ have both black borders and cyan cores.  These are the common points that allow the number of function evaluations to be reduced to $(n+1)(n+2)/2=6$.
\end{ex}

We next demonstrate that every minimal poised set for NSHC of the form $\m(x^0;S,U_k)$ is poised for quadratic interpolation.  We then show that the converse is not true; it is possible to construct a set that is poised for quadratic interpolation, but does not take the form of $\m(x^0;S,U_k)$.  

\begin{prop} \label{prop:minimalsetispoisedforqi} Let $S \in \R^{n \times n}$.  Select $k \in \{0, 1, \ldots, n\}$ and define $U_k$ as in Proposition \ref{prop:reducingfe}.  Then $\m(x^0;S,U_k)$ is poised for quadratic interpolation.
\end{prop}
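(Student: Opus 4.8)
The plan is a reduction argument: using the fact that poisedness for quadratic interpolation is preserved by invertible affine maps of $\R^n$, I would bring the general set $\m(x^0;S,U_k)$ down to one completely explicit configuration and then check that one by hand. \emph{Step 1 (affine invariance).} First I would record the standard fact that if $\Y$ is a set of $\frac{1}{2}(n+1)(n+2)$ distinct points poised for quadratic interpolation and $\phi(x)=Ax+b$ with $A\in\R^{n\times n}$ invertible, then $\phi(\Y)$ is again a set of $\frac{1}{2}(n+1)(n+2)$ distinct points and is poised for quadratic interpolation. The reason is that $Q\mapsto Q\circ\phi$ sends quadratic functions to quadratic functions and is a linear bijection of the $\frac{1}{2}(n+1)(n+2)$-dimensional space of quadratics, with inverse $Q\mapsto Q\circ\phi^{-1}$; it carries the quadratics vanishing on $\phi(\Y)$ onto the quadratics vanishing on $\Y$, and the latter is $\{0\}$ by hypothesis, so the homogeneous system in Definition \ref{def:poisedqi} for $\phi(\Y)$ has only the trivial solution. (This could instead simply be cited.)

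\emph{Step 2 (reduce to the canonical sets).} The columns of $U_k$ are $s^i-s^k=S(e^i-e^k)$ for $i\neq k$ and $-s^k=S(-e^k)$, so $U_k=SE_k$ with $E_k$ as in Definition \ref{def:canminset} (and $U_0=S=SE_0$). Hence every point forming $\m(x^0;S,U_k)$ — each of the form $x^0$, $x^0+s^i$, $x^0+(\text{a column of }U_k)$, or $x^0+s^i+(\text{a column of }U_k)$ — equals $x^0+Sv$ for the corresponding point $v$ of the same construction carried out with $\Id_n$ and $E_k$ in place of $S$ and $U_k$, and conversely. Therefore $\m(x^0;S,U_k)=\phi(\m(\zero;\Id_n,E_k))$ for the invertible affine map $\phi(x)=Sx+x^0$, which is invertible because $S$ has full rank (part of what the notation $\m(x^0;S,U_k)$ presupposes). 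By Step 1 it now suffices to prove that each canonical set $\m(\zero;\Id_n,E_k)$, $k\in\{0,1,\dots,n\}$, is poised for quadratic interpolation.

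\emph{Step 3 (reduce to $k=0$ and verify directly).} For $k\neq 0$, I would observe that adding $e^k$ to every point of $\m(\zero;\Id_n,E_k)$ carries its subsets $\{\zero,e^k,-e^k\}$, $\{e^i\}_{i\neq k}$, $\{e^i-e^k\}_{i\neq k}$, $\{2e^i-e^k\}_{i\neq k}$, $\{e^i+e^j-e^k\}_{i<j,\,i,j\neq k}$ onto $\{\zero,e^k,2e^k\}$, $\{e^i+e^k\}_{i\neq k}$, $\{e^i\}_{i\neq k}$, $\{2e^i\}_{i\neq k}$, $\{e^i+e^j\}_{i<j,\,i,j\neq k}$, whose union is exactly $\m(\zero;\Id_n,E_0)=\{\zero\}\cup\{e^i\}_{i=1}^n\cup\{2e^i\}_{i=1}^n\cup\{e^i+e^j\}_{1\le i<j\le n}$ (the index pairs involving $k$ must be tracked with care). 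So by Step 1 again the proposition reduces to showing this last set is poised for quadratic interpolation, which I would do directly: if $Q(x)=\alpha_0+\alpha^\top x+\frac{1}{2}x^\top\h x$ with $\h=\h^\top$ vanishes on it, then $Q(\zero)=0$ forces $\alpha_0=0$; $Q(e^i)=0$ and $Q(2e^i)=0$ give $\alpha_i+\frac{1}{2}\h_{ii}=0$ and $2\alpha_i+2\h_{ii}=0$, hence $\alpha_i=\h_{ii}=0$ for every $i$; and $Q(e^i+e^j)=0$ for $i<j$ then forces $\h_{ij}=0$. Thus $(\alpha_0,\alpha,\h)=0$, which is precisely poisedness.

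The conceptual content — affine invariance of poisedness and the identity $U_k=SE_k$ — is routine. The part requiring care, and the main place an error could slip in, is the bookkeeping in Steps 2 and 3: confirming that $\m(x^0;S,U_k)$ is exactly the $\phi$-image of $\m(\zero;\Id_n,E_k)$, and that $\m(\zero;\Id_n,E_k)$ is exactly the $e^k$-translate of $\m(\zero;\Id_n,E_0)$ — in particular handling the index pairs that contain $k$, where several of the listed points coincide with columns of the identity.
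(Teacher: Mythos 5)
Your proof is correct, and it reaches the result by a somewhat different route than the paper. The paper proves the statement by a direct elimination on the general point set: it substitutes $\abar^\top=\alpha^\top S$ and $\hhat=S^\top\h S$ (which is your affine change of variables, performed in-line rather than quoted as a lemma), reduces to $k=n$ by Proposition \ref{prop:invmatrix}, and then works through a chain of equations at the points $\zero$, $s^i$, $s^i-s^n$, $-s^n$, $2s^i-s^n$, $s^i+s^j-s^n$ to force $\abar=\zero$ and $\hhat=\zero_{n\times n}$, declaring the $k=0$ case ``analogous.'' You instead make the affine invariance of poisedness an explicit lemma, use $U_k=SE_k$ to pass to the canonical sets $\m(\zero;\Id_n,E_k)$, and then — this is the genuinely new element — observe that translating by $e^k$ carries $\m(\zero;\Id_n,E_k)$ exactly onto $\m(\zero;\Id_n,E_0)$, so that all cases collapse to the single set $\{\zero\}\cup\{e^i\}\cup\{2e^i\}\cup\{e^i+e^j\}_{i<j}$, where poisedness is a three-line computation. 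The paper never links the $k\neq0$ and $k=0$ cases; your translation unifies them and replaces the longer elimination with a much shorter one, at the cost of the bookkeeping you flag (which you carry out correctly: the pairs involving the index $k$ are exactly absorbed by $\{e^i+e^k\}_{i\neq k}$, and your verification at $e^i$, $2e^i$, $e^i+e^j$ does give $\alpha_0=0$, $\alpha_i=\h_{ii}=0$, $\h_{ij}=0$). Both arguments rely, as they must, on the full rank of $S$ implicit in the notation $\m(x^0;S,U_k)$ — you to invert $\phi(x)=Sx+x^0$, the paper to pass from $\abar=\zero,\ \hhat=\zero$ back to $\alpha=\zero,\ \h=\zero$.
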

\begin{proof}  Noting that $\m(x^0;S,U_k)$ is poised for quadratic interpolation if and only if the set  $\m(x^0;S,U_k) -x^0$ is poised for quadratic interpolation, we assume without loss of generality that $x^0=\zero$.

{\bf Case I: $k\in \{1, 2, \ldots, n\}$.}  Without loss of generality, by Proposition \ref{prop:invmatrix}, assume that $k=n$. 
The points contained in $\m(x^0;S,U_n)$ are
    $$\{\zero\} \cup S \cup\{-s^k\}\cup\left\{s^{i}-s^k \right\}_{i \neq k} \cup  (\{s^i + s^j - s^n\}_{i=1}^n)_{j>i}^n.
   $$
We show that using this set, the only solution  to \eqref{eq:poisedqisys} is the trivial solution.  
Considering the point $\zero$, we obtain
\begin{align}
    \az&=0. \label{eq:x0}
\end{align}
Considering the points $s^i$ for $i \in \{1, 2, \dots, n\}$ and noting that for all $i \in \{1, 2, \dots, n\},$ $s^i=Se^i$, we obtain
\begin{align}
    \abar^\top e^i+ \oneh (e^i)^\top \hhat e^i &=0, \label{eq:x0h}
\end{align}
where $\abar^\top=\al^\top S$ and $\hhat=S^\top \h S.$ Note that $\hhat$ is symmetric.  
Considering the points $s^i-s^n$ for $i \in \{ 1, 2, \dots, n-1\},$ we obtain
\begin{equation*}
     \abar^\top e^i -\abar^\top e^n -(e^i)^\top \hhat e^n + \oneh (e^i)^\top \hhat e^i + \oneh (e^n)^\top \hhat e^n=0.
\end{equation*}
Using \eqref{eq:x0h}, this simplifies to 
\begin{align}
  -\abar^\top e^n -(e^i)^\top \hhat e^n + \oneh (e^n)^\top \hhat e^n=0. \label{eq:x0eien}
\end{align}
Considering the point $-s^n,$ we find
\begin{align}
    -\abar^\top e^n + \oneh (e^n)^\top \hhat e^n&=0, \label{eq:aennHn}
\end{align}
which reduces \eqref{eq:x0eien} to 
\begin{align}
  -(e^i)^\top \hhat e^n =0 \quad \mbox{for }~i \in \{1, 2, \ldots, n-1\}. \label{eq:eiHn}
\end{align}
Thus, $(e^i)^\top \hhat e^n=\hhat_{i,n}=\hhat_{n,i}=0$ for all $i \in \{1 ,2, \dots, n-1\}.$ Combining \eqref{eq:x0h} at $i=n$ and \eqref{eq:aennHn} multiplied by $-1$, we get
\begin{align}
    \abar^\top e^n+ \oneh (e^n)^\top \hhat e^n  = 0 = \abar^\top e^n - \oneh (e^n)^\top \hhat e^n, \label{eq:rearrange}
\end{align}
which implies that $(e^n)^\top \hhat e^n= \hhat_{n,n}=0$. Considering the points $2s^i-s^n$ for $i \in \{1, 2, \dots, n-1\},$ we get
\begin{equation*}
    2 \abar^\top e^i - \abar^\top e^n +2 (e^i)^\top \hhat e^i + \oneh (e^n)^\top \hhat e^n=0.
\end{equation*}
Using \eqref{eq:aennHn}, this simplifies to
\begin{align*}
    2 \abar^\top e^i+ 2 (e^i)^\top \hhat e^i=0.
\end{align*}
By multiplying \eqref{eq:x0h} by 2 and substituting in the above equation, we get $(e^i)^\top \hhat e^i=\hhat_{i,i}=0$ for all $i \in \{1, 2, \dots, n-1\}.$  This now implies $\abar_i = \abar^\top e^i=0$ for all $i \in \{1, 2, \ldots, n\}$, i.e., $\abar=\zero$. Lastly, consider the points $s^i+s^j-s^n$ for $i \neq j, i,j \in \{ 1,2, \dots, n-1\}.$ Since $\hhat_{i,i}=0$ for $i \in \{1, 2, \dots, n\}$,  $\hhat_{i,n}=\hhat_{n,i}=0,$ for $i \in \{1, 2, \dots, n-1\}$ and  $\abar=\zero$, we obtain
\begin{equation}
    (e^i)^\top \hhat e^j=0.
\end{equation}
Thus $\hhat=\zero_{n \times n}$. Therefore, the only solution to \eqref{eq:poisedqisys}  is the trivial solution.

{\bf Case II: $k=0$.}  The proof for this is analogous. 
\end{proof}

Next, we provide an example that serves to show that a set of $(n+1)(n+2)/2$ distinct points in $\R^n$ that is poised for quadratic interpolation is not necessarily a minimal poised set for NSHC.

\begin{ex}\label{ex:nonminNSHC}
Let $x^0=\bbm 0&0 \ebm^\top$ be the point of interest. Consider $\mathcal{X}=$ $\{x^0, e^1, e^2, -e^1, -e^2,$ $ -e^1-e^2\}.$ Then $\mathcal{X}$ is poised for quadratic interpolation, but cannot be expressed as a minimal poised set for NSHC at $x^0$.
\end{ex}

\begin{proof}Using a similar approach as in  Proposition \ref{prop:minimalsetispoisedforqi}, one can verify that $\X$ is poised for quadratic interpolation. Now we show that $\X$ is not a minimal poised set for NSHC at $x^0$ using brute force. We need to build $S=\{ s^1, s^2\}$ such that the matrix corresponding to $S$ is full rank and $x^0+S \subseteq \X$. Hence, the possible choices for $S$ are 
\begin{align}
    S &\in \left \{ \{e^1,e^2\},  \{e^1,-e^2\}, \{e^1,-e^1-e^2\}, \{-e^1,e^2 \}, \right . \notag\\ &\quad\quad\left . \{-e^1-e^2, e^2 \}, \{-e^1,-e^2 \}, \{ -e^1,-e^1-e^2 \},  \{-e^1-e^2, -e^2\}  \right \}. \label{eq:exsets}
\end{align}
{\bf Case I: $S=\{e^1,e^2\}.$} In this case, we need to build $T=\{t^1,t^2\}$ such that the matrix corresponding to $T$ is full rank and 
    $$\{t^1, e^1+t^1,  e^2+t^1, t^2, e^1+t^2,  e^2+t^2\} = \X.$$
We see that the only possible choice of $t^1$ such that $\{t^1, e^1+t^1, e^2+t^1\} \subseteq \X $ is $t^1=-e^1-e^2$ (note $t^1 \neq 0$ as we require full rank). However, the only possible choice of $t^2$ such that $\{t^2, e^1+t^2, e^2+t^2\} \subseteq \X $ is $t^2=-e^1-e^2$. As full rank implies $t^1$ cannot equal $t^2$, we see $S=\{e^1,e^2\}$ cannot provide the desired properties.
\\
{\bf Cases II through VIII:} The other options for $S$ can be eliminated analogously.
\\
Therefore, $\X$ cannot be expressed as a minimal poised set for NSHC at $x^0$.
\end{proof}

Figure \ref{fig:notminset} shows all possible directions connecting two points in the set $\X$ from Example \ref{ex:nonminNSHC}. If $\X$ were a minimal poised set for NSHC at $x^0=\zero$, then it would be possible to choose two directions (lines in blue) emerging from $x^0$ that connect to other points in $\Y$ and these same two directions would be emerging from two other points.
\begin{figure}[ht]
\caption{A set that is poised for QI but not a minimal poised set for NSHC at $x^0$.}\label{fig:notminset}
\includegraphics[scale=0.5]{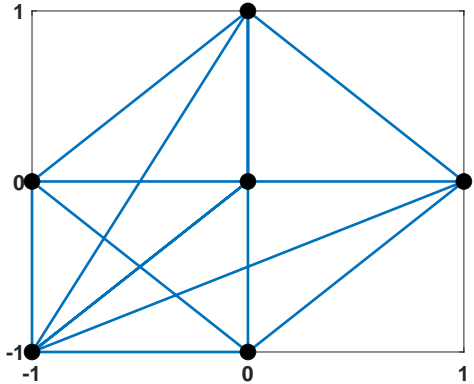}
\centering
\end{figure}

We conclude this section by providing formulae to obtain all the values of the coefficients involved in the quadratic interpolation function of $f$ over a minimal poised set for NSHC of the form $\m(x^0;S,U_k)$, which we denote by  $Q_f(x^0;S,U_k)(x).$ We will see in Section \ref{sec:calculus} that it is valuable to build $Q_f(x^0;S,U_k)(x)$, as it allows for a significant gain in accuracy when defining calculus rules for an approximation of the Hessian. 

\begin{prop} Let $f:\dom f \subseteq \R^n \to \R$.  Let $\m(x^0;S,U_k) \subset \dom f$ be a minimal poised set for NSHC at $x^0$ as constructed in Proposition \ref{prop:reducingfe}. If $k\in\{1, 2, \dots, n\},$ then the Hessian matrix $\h$ of  the quadratic interpolation function $Q_f(x^0;S,U_k)(x)$    is given by $\h=S^{-\top}\hhat S^{-1},$ where the entries of the symmetric matrix $\hhat \in \R^{n \times n}$ are 
\begin{align*}
    \hhat_{i,k}&=-f(x^0+s^i-s^k)+f(x^0+s^i)+f(x^0-s^k)-f(x^0), \quad i \in \{1, 2, \dots, n\}\setminus\{k\},\\
    \hhat_{i,i}&=f(x^0+2s^i-s^k)-2f(x^0+s^i-s^k)+f(x^0-s^k), \quad i \in \{1, 2, \dots, n\}\setminus\{k\},\\
     \hhat_{k,k}&=f(x^0+s^k)+f(x^0-s^k)-2f(x^0),\\
    \hhat_{i,j}&=f(x^0+s^i+s^j-s^k)-f(x^0+s^i-s^k)-f(x^0+s^j-s^k)+f(x^0-s^k),
\end{align*}
for all $i,j \in \{1, 2, \dots, n\}\setminus\{k\}, i \neq j.$ If $k=0,$ then
\begin{align*}
    \hhat_{i,i}&=f(x^0+2s^i)-2f(x^0+s^i)+f(x^0), \quad i \in \{1, 2, \dots, n\},\\
    \hhat_{i,j}&=f(x^0+s^i+s^j)-f(x^0+s^i)-f(x^0+s^j)+f(x^0), \quad i,j \in \{1, 2, \dots, n\}, i \neq j.
\end{align*}
For all $k \in \{0, 1, \dots, n\},$ the vector $\alpha$ associated to $Q_f(x^0;S,U_k)(x)$ is given by  $\alpha=S^{-\top}\abar$, where  the entries of $\abar \in \R^n$ are 
\begin{align*}
    \abar_i&=f(x^0+s^i)-f(x^0)-\frac{1}{2}\hhat_{i,i}-(x^0)^\top\h s^i, \quad i \in \{1, 2,\dots, n\}.
\end{align*}
 The scalar $\alpha_0$ of $Q_f(x^0;S,U_k)(x)$ is 
\begin{align*}
    \alpha_0 &=f(x^0)-\alpha^\top x^0-\frac{1}{2}(x^0)^\top \h x^0.
\end{align*}
\end{prop}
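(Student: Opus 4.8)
The plan is to build the quadratic interpolation function $Q_f(x^0;S,U_k)(x)$ explicitly and read off its coefficients. By definition, $Q_f(x^0;S,U_k)(x)=\alpha_0+\alpha^\top x+\tfrac12 x^\top\h x$ is the unique quadratic that interpolates $f$ on the minimal poised set. Since $\m(x^0;S,U_k)$ is poised for quadratic interpolation (Proposition \ref{prop:minimalsetispoisedforqi}), such a $Q_f$ exists and is unique, so it suffices to exhibit coefficients that satisfy the interpolation equations; uniqueness then guarantees they are the right ones. First I would reduce to the case $x^0=\zero$: writing $\tilde f(z)=f(x^0+z)$, the interpolation conditions for $f$ on $\m(x^0;S,U_k)$ are exactly those for $\tilde f$ on $\m(\zero;S,U_k)$, and the translation $x\mapsto x-x^0$ converts the quadratic for $\tilde f$ into that for $f$, which is where the $(x^0)^\top\h s^i$ and $-\alpha^\top x^0-\tfrac12(x^0)^\top\h x^0$ corrections in the stated formulae for $\abar_i$ and $\alpha_0$ come from.

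Next I would perform the change of variables $x=Sz$ (valid since $S$ is invertible, $S$ having full rank by hypothesis), setting $\hhat=S^\top\h S$ and $\abar^\top=\alpha^\top S$, so that on the points $s^i=Se^i$ etc. the quadratic becomes $\az+\abar^\top z+\tfrac12 z^\top\hhat z$ evaluated at lattice-type points $e^i$, $-e^k$, $e^i-e^k$, $2s^i-s^k$ (i.e. $2e^i-e^k$), $e^i+e^j-e^k$. This mirrors exactly the bookkeeping already carried out in the proof of Proposition \ref{prop:minimalsetispoisedforqi}. The interpolation equations are then a triangular-in-difficulty linear system: evaluating at $\zero$ gives $\az=\tilde f(\zero)=f(x^0)$ (before the $x^0$-shift); the combination $\tilde f(s^i+s^j-s^k)-\tilde f(s^i-s^k)-\tilde f(s^j-s^k)+\tilde f(-s^k)$ isolates the mixed second difference and yields $\hhat_{i,j}$; the combinations at $2s^i-s^k$, $s^i-s^k$, $-s^k$ give the second-order difference for $\hhat_{i,i}$; the point $s^k$ together with $-s^k$ and $\zero$ gives $\hhat_{k,k}$ as a centred second difference; and $\hhat_{i,k}$ comes from the four-point combination at $s^i-s^k$, $s^i$, $-s^k$, $\zero$. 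Having pinned down $\hhat$, the first-order coefficient follows from the $s^i$-equation $\abar^\top e^i+\tfrac12\hhat_{i,i}=\tilde f(s^i)-f(x^0)$ (again adjusting for the $x^0$-translation to get the $(x^0)^\top\h s^i$ term), and finally undoing the change of variables gives $\h=S^{-\top}\hhat S^{-1}$ and $\alpha=S^{-\top}\abar$. The case $k=0$ is handled the same way using the point list \eqref{eq:listofpointsu0}, where the absence of the $-s^k$ shift simplifies the $\hhat_{i,i}$ and $\hhat_{i,j}$ formulae to forward second differences.

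The only genuine subtlety — not really an obstacle — is correctly propagating the two nested changes of coordinates (the translation by $x^0$ and the linear map $S$) through the quadratic, so that the cross term $\tfrac12 x^\top\h x$ contributes the right linear and constant pieces after the shift; this is pure algebra once one is careful. I expect the bulk of the write-up to be selecting, for each entry of $\hhat$, the specific affine combination of the interpolation equations that isolates it and then invoking uniqueness from Definition \ref{def:poisedqi}, exactly as in Proposition \ref{prop:minimalsetispoisedforqi}.
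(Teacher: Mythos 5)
Your proposal is correct and follows essentially the same route as the paper: evaluate the quadratic at the points of $\m(x^0;S,U_k)$, pass to $\hhat=S^\top\h S$ and $\abar=S^\top\alpha$ via $s^i=Se^i$, and isolate each entry of $\hhat$, then $\abar$ and $\alpha_0$, through the same second-difference combinations of interpolation equations. The only cosmetic differences (translating to $x^0=\zero$ first and appealing to uniqueness of the interpolant rather than solving the system in place with $x^0$ retained) do not change the substance of the argument.
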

\begin{proof}
The result is obtained by using Definition \ref{def:quadinterpolationfunc}. Let
\begin{align*}
    Q_f(x^0;S,U_k)(x)=\alpha_0+\alpha^\top x+\frac{1}{2}x^\top \h x,
\end{align*}
where $a_0 \in \R, \alpha \in \R^n$ and $\h=\h^\top \in \R^{n \times n}.$ Suppose $k \in \{1, 2, \dots, n\}.$
Evaluating $Q_f(x^0;S,U_k)(x)$ at $x^0$, we obtain
\begin{align}
    \alpha_0+\alpha^\top x^0+\frac{1}{2}(x^0)^\top\h x^0&=f(x^0). \label{eq:x0f}
\end{align}
Evaluating $Q_f(x^0;S,U_k)(x)$ at $x^0+s^i$ and using \eqref{eq:x0f}, we obtain
\begin{align}
    f(x^0)+\abar^\top e^i+(x^0)^\top \hba e^i+\frac{1}{2}\hhat_{i,i}&=f(x^0+s^i), \quad i \in \{1, \dots, n\}, \label{eq:x0psi}
\end{align}
where $\abar^\top=\alpha^\top S, \hba=\h S$ and $\hhat=S^\top \h S.$ Evaluating $Q_f(x^0;S,U_k)(x)$ at $x^0+s^i-s^k$ and using \eqref{eq:x0f} and \eqref{eq:x0psi}, we obtain
\begin{equation}
   \scalemath{1}{ f(x^0+s^i)-\abar^\top s^k-(x^0)^\top \hba e^k-\hhat_{i,k}+\frac{1}{2}\hhat_{k,k}=f(x^0+s^i-s^k),  i \in \{1, \dots, n\}\setminus \{k\}.}\label{eq:x0simsk}
\end{equation}
Evaluating $Q_f(x^0;S,U_k)(x)$ at $x^0-s^k$ and using \eqref{eq:x0f}, we find
\begin{align}
    -\abar^\top e^k-(x^0)^\top \hba e^k+ \frac{1}{2} \hhat_{k,k}&=f(x^0-s^k)-f(x^0). \label{eq:x0msk}
\end{align}
Substituting \eqref{eq:x0msk} into \eqref{eq:x0simsk}, we obtain
\begin{equation}
    \hhat_{i,k}=\hhat_{k,i}=-f(x^0+s^i-s^k)+f(x^0+s^i)+f(x^0-s^k)-f(x^0),  i \in \{1, \dots, n\}\setminus \{k\}. \label{eq:hhatik}
\end{equation}
Evaluating $Q_f(x^0;S,U_k)(x)$ at $x^0+2s^i-s^k$ and using \eqref{eq:x0f}, \eqref{eq:x0psi} and \eqref{eq:x0msk}, we find
\begin{align}
     \scalemath{0.9}{2f(x^0+s^i)-2f(x^0)+f(x^0-s^k)+\hhat_{i,i}-2\hhat_{i,k}=f(x^0+2s^i-s^k) \label{eq:x0p2simsk}, i \in \{1, \dots, n\}\setminus\{k\}.}
\end{align}
Substituting \eqref{eq:hhatik} in \eqref{eq:x0p2simsk}, we find
\begin{align*}
    \hhat_{i,i}&=f(x^0+2s^i-s^k)+f(x^0-s^k)-2f(x^0+s^i-s^k), \quad i \in \{1, 2, \dots, n\}\setminus\{k\}.
\end{align*}
Evaluating $Q_f(x^0;S,U_k)(x)$ at $x^0+s^i+s^j-s^k$ and using \eqref{eq:x0f}, \eqref{eq:x0psi} and \eqref{eq:x0msk}, we find
\begin{align*}
    \hhat_{i,j}=\hhat_{j,i}&=f(x^0+s^i+s^j-s^k)-f(x^0+s^i-s^k)-f(x^0+s^j-s^k)+f(x^0-s^k), 
\end{align*}
for $i,j \in \{1, 2, \dots, n\}\setminus\{k\}, i \neq j.$ Rearranging \eqref{eq:x0msk}, we get
\begin{align}
    \frac{1}{2} \hhat_{k,k}-f(x^0-s^k)+f(x^0)&=\abar^\top e^k+(x^0)^\top\hba e^k \label{eq:rearrangex0msk}.
\end{align}
Substituting \eqref{eq:rearrangex0msk} into \eqref{eq:x0psi} for $i=k$, we obtain
\begin{align*}
    \hhat_{k,k}&=f(x^0+s^k)+f(x^0-s^k)-2f(x^0).
\end{align*}
The entries of the vector $\abar$ are found by isolating $\abar^\top e^i$ in \eqref{eq:x0psi}. We obtain
\begin{align*}
    \abar_i&=f(x^0+s^i)-f(x^0)-\frac{1}{2} \hhat_{i,i}-(x^0)^\top\h s^i, \quad i \in \{1, 2, \dots, n\} 
\end{align*}
where $\alpha_i=S^{-\top}\abar_i.$ Lastly, the scalar $\alpha_0$ is obtained from \eqref{eq:x0f}. We find
\begin{align*}
    \alpha_0&=f(x^0)-\alpha^\top x^0-\frac{1}{2}(x^0)^\top \h x^0.
\end{align*}
If $k=0$, a similar process can be applied to obtain $\h, \alpha$ and $\alpha_0.$ \qedhere 
\end{proof}

It is worth emphasizing that $Q_f(x^0;S,U_k)(x)$ can be obtained for free in terms of function evaluations whenever  $\nabla^2_s f(x^0;S,U_k)$ has already been computed. Indeed, all the coeficients of $Q_f(x^0;S,T)$ are computed using the same function evaluations used in the computation of $\nabla_s^2 f(x^0;S,U_k).$ 

\section{Error bound for the nested-set Hessian}\label{sec:error}

In this section, an error bound is defined for the nested-set Hessian. This error bound will demonstrate that, in the language of Definition \ref{df:orderNaccuracy}, the nested-set Hessian is an order-1 Hessian accuracy approximation technique. Thereafter, a simplified formula for the error bound  is provided in the particular case where the set used to compute the nested-set Hessian is the $k^{\mbox{th}}$-canonical minimal poised set for NSHC at $x^0$ scaled by a nonzero scalar $\beta$.

\begin{prop}[Error bound for the nested-set Hessian] \label{prop:ebsimplexhessian}
Let $f:\dom f \subseteq \R^n\to\R$ be $\mathcal{C}^{3}$ on $B(x^0;\overline{\Delta})$ where $x^0 \in \dom f$ is the point of interest and $\overline{\Delta}>0$. Denote by $L_{\nabla^2 f}$ the Lipschitz constant of $\nabla^2 f$ on $B(x^0,\overline{\Delta}).$ Let $S=\bbm s^1&s^2&\cdots&s^m\ebm \in \R^{n \times m}$ and $T=\bbm t^1&t^2&\cdots&t^k\ebm \in \R^{n \times k}$ be full row rank  and $B(x^0+s^i;\Delta_T)\subset B(x^0;\overline{\Delta})$  for all $i$. 
 Then
\begin{align}\label{eq:ebgsh}
\|\sh f(x^0;S,T)-\nabla^2 f(x^0)\| &\leq \frac{ m \sqrt{k}}{3}L_{\nabla^2 f}  \left ( 2\frac{\Delta_u}{\Delta_l}+3 \right ) \left \Vert (\widehat{S}^\top) ^\dagger\right \Vert \left \Vert \widehat{T}^\dagger \right \Vert \Delta_u,
\end{align}
where $\widehat{S}=S/\Delta_S, \widehat{T}=T/\Delta_T, \Delta_u=\max \{\Delta_S, \Delta_T\},$ and $\Delta_l=\min \{\Delta_S, \Delta_T\}.$ Moreover, if $\Delta_u=\Delta_l,$ then
\begin{align*}
    \|\sh f(x^0;S,T)-\nabla^2 f(x^0)\| &\leq \frac{ 5m \sqrt{k} }{3}  L_{\nabla^2 f} \left \Vert (\widehat{S}^\top) ^\dagger\right \Vert \left \Vert \widehat{T}^\dagger \right \Vert \Delta_u.
\end{align*}
\end{prop}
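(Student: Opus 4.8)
The plan is to reduce the matrix estimate to an estimate on the rows of $\dnsf(x^0;S,T)$ and then remove the pseudoinverse. Write $g_i:=\ns f(x^0+s^i;T)-\ns f(x^0;T)\in\R^n$, so that $g_i^\top$ is the $i$-th row of $\dnsf(x^0;S,T)$. First I would prove the row estimate
\[\left\|g_i-\nabla^2 f(x^0)\,s^i\right\|\le\tfrac{1}{2}L_{\nabla^2 f}\,\|s^i\|\left(\sqrt{k}\,\big\|(\widehat{T}^\top)^\dagger\big\|\,\Delta_T+\|s^i\|\right),\qquad i=1,\dots,m.\]
Since $\nabla^2 f(x^0)$ is symmetric, $(\nabla^2 f(x^0)s^i)^\top$ is the $i$-th row of $S^\top\nabla^2 f(x^0)$, so the row estimates assemble into a matrix identity $\dnsf(x^0;S,T)=S^\top\nabla^2 f(x^0)+\mathcal{E}$, where the $i$-th row of $\mathcal{E}$ is $(g_i-\nabla^2 f(x^0)s^i)^\top$. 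Because $S$ has full row rank, $S^\top$ has full column rank and $(S^\top)^\dagger S^\top=\Id_n$; hence $\sh f(x^0;S,T)-\nabla^2 f(x^0)=(S^\top)^\dagger\mathcal{E}$ and $\|\sh f(x^0;S,T)-\nabla^2 f(x^0)\|\le\|(S^\top)^\dagger\|\,\|\mathcal{E}\|$.

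The heart of the argument is the row estimate, and the key observation is that $g_i$ is itself a generalized simplex gradient. Setting $p_i(y):=f(y+s^i)-f(y)$, the mixed second difference
\[f(x^0+s^i+t^j)-f(x^0+s^i)-f(x^0+t^j)+f(x^0)\]
equals both the $j$-th component of $\ds(x^0+s^i;T)-\ds(x^0;T)$ and the $j$-th component of $\delta_{p_i}(x^0;T)$; therefore $g_i=\ns p_i(x^0;T)$. I would then apply Proposition \ref{prop:GSGerror} to $p_i$. The hypothesis $B(x^0+s^i;\Delta_T)\subset B(x^0;\overline{\Delta})$ together with $f\in\mathcal{C}^3$ guarantees that $p_i$ is $\mathcal{C}^2$ on a ball about $x^0$ of radius exceeding $\Delta_T$, and writing $\nabla p_i(y)-\nabla p_i(y')=\int_0^1\big(\nabla^2 f(y+\sigma s^i)-\nabla^2 f(y'+\sigma s^i)\big)s^i\,d\sigma$ shows that the Lipschitz constant of $\nabla p_i$ on that ball is at most $L_{\nabla^2 f}\|s^i\|$. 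Proposition \ref{prop:GSGerror} then gives $\|g_i-\nabla p_i(x^0)\|\le\tfrac{\sqrt{k}}{2}L_{\nabla^2 f}\|s^i\|\,\|(\widehat{T}^\top)^\dagger\|\,\Delta_T$; since $\nabla p_i(x^0)=\nabla f(x^0+s^i)-\nabla f(x^0)$, combining this with the standard integral-remainder estimate $\|\nabla f(x^0+s^i)-\nabla f(x^0)-\nabla^2 f(x^0)s^i\|\le\tfrac{1}{2}L_{\nabla^2 f}\|s^i\|^2$ and the triangle inequality yields the row estimate.

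To finish, I would bound $\|\mathcal{E}\|\le\|\mathcal{E}\|_F$ and then $\|\mathcal{E}\|_F$ by a multiple of $m$ (the exact factor, $m$ or $\sqrt{m}$, depending on which norm inequality is used for the vector of row norms) times the largest row norm, using $\|s^i\|\le\Delta_S$. Multiplying by $\|(S^\top)^\dagger\|=\Delta_S^{-1}\|(\widehat{S}^\top)^\dagger\|$ cancels one factor of $\Delta_S$; then bounding the remaining lengths $\Delta_S,\Delta_T$ above by $\Delta_u$ and, where a stray $\Delta_S^{-1}$ survives under the coarser bookkeeping, below by $\Delta_l$, produces the factor $2\Delta_u/\Delta_l+3$ and the bound \eqref{eq:ebgsh}. (Note $\|(\widehat{T}^\top)^\dagger\|=\|\widehat{T}^\dagger\|$, since the spectral norm is invariant under transposition.) The case $\Delta_u=\Delta_l$ is just the specialization $2\Delta_u/\Delta_l+3=5$.

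The only nonroutine step is the observation in the second paragraph: the difference of two generalized simplex gradients telescopes into a single generalized simplex gradient of the shifted difference $p_i$. This is exactly what allows Proposition \ref{prop:GSGerror} to be applied with only $L_{\nabla^2 f}$ appearing in the bound; a direct split of $g_i$ into two separate simplex-gradient errors would instead introduce a Lipschitz constant of $\nabla f$, which is not present in the statement. Everything afterward --- checking the smoothness of $p_i$ on the correct ball, tracking which of $\Delta_S,\Delta_T$ appears where, and pinning down the numerical constant --- is careful but routine.
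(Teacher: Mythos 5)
Your proposal is correct, but it takes a genuinely different route for the core estimate. You share the paper's first step (writing $\sh f(x^0;S,T)-\nabla^2 f(x^0)=(S^\top)^\dagger\mathcal{E}$ with $\mathcal{E}=\dnsf(x^0;S,T)-S^\top\nabla^2 f(x^0)$ and pulling out $\|(S^\top)^\dagger\|$), but where the paper then factors $T^\dagger$ out of each row and bounds every mixed second difference $f(x^0+s^i+t^j)-f(x^0+s^i)-f(x^0+t^j)+f(x^0)-(s^i)^\top\nabla^2 f(x^0)t^j$ directly by three applications of Taylor's theorem with cubic remainders (yielding the expression $((\Delta_S+\Delta_T)^3+\Delta_S^3+\Delta_T^3)/(\Delta_S\Delta_T)$, and hence the factor $2\Delta_u/\Delta_l+3$), you instead recognize each row difference as a single generalized simplex gradient of the parallel-difference function $p_i(y)=f(y+s^i)-f(y)$ and reuse Proposition \ref{prop:GSGerror} with the Lipschitz constant $L_{\nabla^2 f}\|s^i\|$ of $\nabla p_i$, plus the standard estimate $\|\nabla f(x^0+s^i)-\nabla f(x^0)-\nabla^2 f(x^0)s^i\|\le\tfrac12 L_{\nabla^2 f}\|s^i\|^2$. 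This telescoping observation is exactly right and is the nonroutine ingredient; it buys a shorter argument that recycles an existing result, and in fact yields a slightly sharper intermediate bound, roughly $\tfrac{\sqrt m}{2}L_{\nabla^2 f}\|(\widehat S^\top)^\dagger\|\bigl(\sqrt k\,\|\widehat T^\dagger\|\Delta_T+\Delta_S\bigr)$, with $\sqrt m$ in place of $m$ and no $\Delta_u/\Delta_l$ ratio. One small point you leave implicit in the final bookkeeping: your bound has a lone $\Delta_S$ term carrying no $\|\widehat T^\dagger\|$ factor, so to recover the stated inequality \eqref{eq:ebgsh} verbatim you must absorb it, e.g.\ via $\sqrt k\,\|\widehat T^\dagger\|\ge\sqrt n\ge1$ (which follows from $\|\widehat T\|_F\le\sqrt k$ and $\sigma_{\min}(\widehat T)\le\|\widehat T\|_F/\sqrt n$), after which your bound implies the paper's since $\tfrac13(2\Delta_u/\Delta_l+3)\ge\tfrac53$; your description of the factor $2\Delta_u/\Delta_l+3$ being "produced" by the bookkeeping is therefore not quite how it goes --- your route never generates that ratio, it simply proves something at least as strong. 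The remaining items you flag (smoothness of $p_i$ on the relevant ball and the identity $\nabla p_i(y)-\nabla p_i(y')=\int_0^1(\nabla^2 f(y+\sigma s^i)-\nabla^2 f(y'+\sigma s^i))s^i\,d\sigma$) check out, up to the same open-versus-closed ball boundary technicalities already present in the paper's own argument.
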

\begin{proof} As $S$ is full  row rank, we have
\begin{align}
   \Vert \sh f(x^0;S,T)-\nabla^2 f(x^0)\Vert&=\Vert  (S^\top)^\dagger \dnsf(x^0;S,T)-(S^\top)^\dagger S^\top \nabla^2f(x^0) \Vert \notag\\
    &\leq \Vert(S^\top)^\dagger \Vert \Vert \dnsf(x^0;S,T)-S^\top \nabla^2 f(x^0)  \Vert \notag\\
    &= \frac{1}{\Delta_S}\Vert(\widehat{S}^\top)^\dagger \Vert \Vert \dnsf(x^0;S,T)-S^\top \nabla^2 f(x^0)  \Vert. \label{eq:youngster}
\end{align}
Now we find a bound for  $\Vert \dnsf(x^0;S,T)-S^\top \nabla^2 f(x^0)  \Vert.$ We have
\begin{align}
    &\Vert \dnsf(x^0;S,T)-S^\top \nabla^2 f(x^0)  \Vert \notag\\
    \leq& \sum_{i=1}^m \left \Vert \left ( \ns f(x^0+s^i;T)-\ns f(x^0;T)\right )^\top -(s^i)^\top\nabla^2 f(x^0) \right \Vert \notag\\
    =&\sum_{i=1}^m \left \Vert \left ((T^\top)^\dagger\left ( \delta_f (x^0+s^i;T) - \delta_f(x^0;T)\right ) \right )^\top -(s^i)^\top \nabla^2 f(x^0) \right \Vert \notag\\
    =&\sum_{i=1}^m \left \Vert \left ( \delta_f (x^0+s^i;T)-\delta_f(x^0;T)\right )^\top T^\dagger -(s^i)^\top \nabla^2 f(x^0)  T T^\dagger \right \Vert \notag \\
    \leq& \left \Vert  T^\dagger \right \Vert \sum_{i=1}^m \left \Vert \left ( \delta_f (x^0+s^i;T)-\delta_f(x^0;T)\right )^\top  -(s^i)^\top \nabla^2 f(x^0)T \right \Vert \notag\\
     =& \left \Vert  T^\dagger \right \Vert \sum_{i=1}^m \left (\sum_{j=1}^k \left \vert f(x^0+s^i+t^j)-f(x^0+s^i)-f(x^0+t^j)+f(x^0)-(s^i)^\top \nabla^2f(x^0)t^j \right \vert^2 \right )^{\frac{1}{2}}. \label{eq:doublesum}
\end{align}
Using Taylor's Theorem, we know
\begin{equation}
   \scalemath{0.97}{ f(x^0+s^i+t^j)=f(x^0)+\nabla f(x^0)^\top (s^i+t^j)+\frac{1}{2} (s^i+t^j)^\top \nabla^2 f(x^0) (s^i+t^j)+ R_2(x^0+s^i+t^j),} \label{eq:fx0hidj}
\end{equation}
where $R_2(x^0+s^i+t^j)$ is the remainder term (see, for example, \cite{Burden2016}). By Taylor's theorem, we can also write
\begin{align}
    f(x^0+s^i)&=f(x^0)+\nabla f(x^0)^\top (s^i)+\frac{1}{2} (s^i)^\top \nabla^2 f(x^0) s^i+ R_2(x^0+s^i), \label{eq:fx0hi}\\
    f(x^0+t^j)&=f(x^0)+\nabla f(x^0)^\top t^j+\frac{1}{2} (t^j)^\top \nabla^2 f(x^0) t^j+ R_2(x^0+t^j) \label{eq:fx0dj}.
\end{align}
Subtracting \eqref{eq:fx0hi} and \eqref{eq:fx0dj} from \eqref{eq:fx0hidj}, we obtain
\begin{align*}
    &f(x^0+s^i+t^j)-f(x^0+s^i)-f(x^0+t^j)\\
    =&-f(x^0)+(s^i)^\top \nabla^2 f(x^0)t^j+R_2(x^0+s^i+t^j)-R_2(x^0+s^i)-R_2(x^0+t^j).
\end{align*}
Rearranging and taking the norm on both sides, we get
\begin{align}
     &\vert f(x^0+s^i+t^j)-f(x^0+s^i)-f(x^0+t^j) +f(x^0)-(s^i)^\top \nabla^2 f(x^0)t^j \vert \notag\\
     \leq& \Vert R_2(x^0+s^i+t^j)\Vert +\Vert R_2(x^0+s^i)\Vert + \Vert R_2(x^0+t^j) \Vert \notag \\
     \leq& \frac{1}{6}L_{\nabla^2 f} \Vert s^i+t^j\Vert^3+\frac{1}{6}L_{\nabla^2 f} \Vert s^i\Vert^3+\frac{1}{6}L_{\nabla^2 f} \Vert t^j\Vert^3 \notag\\
     \leq& \frac{1}{6} L_{\nabla^2 f} (\Delta_S+\Delta_T)^3+\frac{1}{6} L_{\nabla^2 f} \Delta_S^3+\frac{1}{6} L_{\nabla^2 f} \Delta_T^3 \notag \\
     =&\frac{1}{6}L_{\nabla^2 f} \left ( (\Delta_S+\Delta_T)^3+\Delta_S^3+\Delta_T^3\right ). \label{eq:Deltapower3}
\end{align}
Using \eqref{eq:Deltapower3} in \eqref{eq:doublesum}, we get the inequality
\begin{align}
     \Vert \dnsf(x^0;S,T)-S^\top \nabla^2 f(x^0)  \Vert \leq \left \Vert  T^\dagger \right \Vert \frac{m \sqrt{k}}{6}  L_{\nabla^2 f} \left ( (\Delta_S+\Delta_T)^3+\Delta_S^3+\Delta_T^3 \right ). \label{eq:boundDelta}
\end{align}
Using \eqref{eq:boundDelta} in \eqref{eq:youngster}, we now have
\begin{align*}
    \Vert \sh f(x^0;S,T)-\nabla^2 f(x^0)\Vert &\leq \frac{m \sqrt{k}}{6}  L_{\nabla^2 f}   \left ( \frac{(\Delta_S+\Delta_T)^3+\Delta_S^3+\Delta_T^3}{\Delta_S \Delta_T} \right ) \Vert(\widehat{S}^\top)^\dagger \Vert  \left \Vert  \widehat{T}^\dagger \right \Vert\\
    &=\frac{m \sqrt{k}}{6}  L_{\nabla^2 f} \left ( \frac{2\Delta_S^3+2\Delta_T^3}{\Delta_S \Delta_T}+3\Delta_S+3\Delta_T \right ) \Vert(\widehat{S}^\top)^\dagger \Vert  \left \Vert  \widehat{T}^\dagger \right \Vert.
\end{align*}
Applying $\Delta_u=\max \{ \Delta_S, \Delta_T\}$ and $\Delta_l= \min \{ \Delta_S, \Delta_T\}$, we get
\begin{align*}
    \Vert \sh f(x^0;S,T)-\nabla^2 f(x^0)\Vert &\leq \frac{m \sqrt{k} }{6}  L_{\nabla^2 f} \left ( \frac{4\Delta_u^2}{\Delta_l}+6\Delta_u \right ) \Vert(\widehat{S}^\top)^\dagger \Vert  \left \Vert  \widehat{T}^\dagger \right \Vert\\
    &=\frac{m \sqrt{k} }{3}  L_{\nabla^2 f} \left ( 2\frac{\Delta_u}{\Delta_l}+3 \right ) \Vert(\widehat{S}^\top)^\dagger \Vert  \left \Vert  \widehat{T}^\dagger \right \Vert \Delta_u.
\end{align*}
Finally, in the case where $\Delta_u=\Delta_l$, this reduces to 
\begin{align*}
    \Vert \sh f(x^0;S,T)-\nabla^2 f(x^0)\Vert &\leq \frac{5 m \sqrt{k}}{3}  L_{\nabla^2 f} \left \Vert(\widehat{S}^\top)^\dagger  \right \Vert  \left \Vert  \widehat{T}^\dagger \right \Vert \Delta_S.
\end{align*}
\end{proof}

Note that if $\Delta_u \neq \Delta_l$, when $\Delta_u$ tends to zero, we need $\frac{\Delta_u}{\Delta_l}$ to be finite, so that the error bound goes to zero. One way of ensuring this is to have $\Delta_u$ and $\Delta_l$ decrease at the same rate. This is certainly the case when a minimal poised set for NSHC of the form $\m(x^0;S,U_k)$ is considered, as $U_k$ is built from $S$. The next corollary provides a simplified bound when the $k^{\mbox{th}}$-minimal poised set for NSHC scaled by $\beta\neq0$ is used to compute the nested-set Hessian.

\begin{cor}\label{cor:EBminsetforSHC}
Let $f:\dom f \subseteq \R^n\to\R$ be $\mathcal{C}^{3}$ on $B(x^0,\overline{\Delta})$ and denote by $L_{\nabla^2 f}$ the Lipschitz constant of $\nabla^2 f$ on $B(x^0,\overline{\Delta}).$ Let $\m(x^0;\beta \Id,\beta E_k)\subset B(x^0,\overline{\Delta})$ be a  minimal poised set for NSHC at $x^0$, where $\beta$ is a nonzero scalar.
Then, for $k=0$
 \begin{align*}
\|\sh f(x^0;\beta \Id, \beta E_k)-\nabla^2 f(x^0)\| &\leq \frac{5}{3} \, n\sqrt{n} \, L_{\nabla^2 f} \beta.
\end{align*}
and for $k\in \{1, 2, \ldots, n\}$
\begin{align*}
\|\sh f(x^0;\beta \Id, \beta E_k)-\nabla^2 f(x^0)\| &\leq \frac{11}{2} \, n^2 \, L_{\nabla^2 f} \beta.
\end{align*}
\end{cor}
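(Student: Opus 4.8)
The plan is to apply Proposition~\ref{prop:ebsimplexhessian} with $S=\beta\Id_n$ and $T=\beta E_k$, and then simplify the resulting constants.  Both $\beta\Id_n$ and $\beta E_k$ are invertible, hence full row rank, so (taking $\overline\Delta$ large enough that the Taylor-remainder hypotheses carry over from the set-containment assumption) Proposition~\ref{prop:ebsimplexhessian} applies; we may assume $\beta>0$.  In its notation $m=n$ and $T$ has $n$ columns, so the factor $\sqrt k$ becomes $\sqrt n$.  Every column of $\beta\Id_n$ has norm $\beta$, so $\Delta_S=\beta$, $\widehat S=\Id_n$, and $\|(\widehat S^\top)^\dagger\|=1$.

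First consider $k=0$, where $E_0=\Id_n$.  Then $T=\beta\Id_n$, so $\widehat T=\Id_n$, $\|\widehat T^\dagger\|=1$, and $\Delta_u=\Delta_l=\beta$; the $\Delta_u=\Delta_l$ estimate of Proposition~\ref{prop:ebsimplexhessian} gives directly
\begin{equation*}
\|\sh f(x^0;\beta\Id_n,\beta E_0)-\nabla^2 f(x^0)\|\le\frac{5n\sqrt n}{3}\,L_{\nabla^2 f}\cdot1\cdot1\cdot\beta=\frac{5}{3}\,n\sqrt n\,L_{\nabla^2 f}\,\beta .
\end{equation*}

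Now let $k\in\{1,\dots,n\}$.  The columns of $\beta E_k$ are $-\beta e^k$ (norm $\beta$) and $\beta(e^i-e^k)$, $i\neq k$ (norm $\sqrt2\,\beta$), so $\Delta_T=\sqrt2\,\beta$, hence $\Delta_u=\sqrt2\,\beta$, $\Delta_l=\beta$, and $2\Delta_u/\Delta_l+3=2\sqrt2+3$.  The key step is that $E_k$ is an involution: with $\one$ the all-ones vector, $E_k=\Id_n-e^k(\one+e^k)^\top$, and since $(\one+e^k)^\top e^k=2$ a direct computation (or checking the action on each $e^j$ twice) gives $E_k^2=\Id_n$.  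Thus $\widehat T=\tfrac1{\sqrt2}E_k$ is invertible with $\widehat T^\dagger=\widehat T^{-1}=\sqrt2\,E_k$, whence
\begin{equation*}
\|\widehat T^\dagger\|=\sqrt2\,\|E_k\|\le\sqrt2\,\|E_k\|_F=\sqrt2\,\sqrt{2n-1}\le2\sqrt n ,
\end{equation*}
using that the $n$ columns of $E_k$ contribute $n-1$ summands equal to $2$ and one summand equal to $1$ to $\|E_k\|_F^2$.  Substituting $m=n$, the factor $\sqrt n$, $\|(\widehat S^\top)^\dagger\|=1$, $\|\widehat T^\dagger\|\le2\sqrt n$, $2\Delta_u/\Delta_l+3=2\sqrt2+3$, and $\Delta_u=\sqrt2\,\beta$ into \eqref{eq:ebgsh} yields
\begin{align*}
\|\sh f(x^0;\beta\Id_n,\beta E_k)-\nabla^2 f(x^0)\|
&\le\frac{n\sqrt n}{3}\,L_{\nabla^2 f}\,(2\sqrt2+3)\,(2\sqrt n)\,(\sqrt2\,\beta)\\
&=\frac{2\sqrt2\,(2\sqrt2+3)}{3}\,n^2\,L_{\nabla^2 f}\,\beta
=\frac{8+6\sqrt2}{3}\,n^2\,L_{\nabla^2 f}\,\beta ,
\end{align*}
and $\frac{8+6\sqrt2}{3}=5.49\ldots<\frac{11}{2}$ gives the claimed inequality.

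The only step that is not routine substitution is the identity $E_k^{-1}=E_k$, which trivializes the bound on $\|\widehat T^\dagger\|$; without it one would instead have to bound the smallest singular value of $E_k$ directly.  Everything else is the arithmetic above together with the elementary inequality $\frac{8+6\sqrt2}{3}<\frac{11}{2}$, plus a check that the Taylor-remainder hypotheses of Proposition~\ref{prop:ebsimplexhessian} are inherited from the assumption $\m(x^0;\beta\Id_n,\beta E_k)\subset B(x^0,\overline\Delta)$.
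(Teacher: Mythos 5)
Your proposal is correct and follows essentially the same route as the paper: apply Proposition~\ref{prop:ebsimplexhessian} with $S=\beta\Id$, $T=\beta E_k$, note $\Delta_S=\beta$, $\Delta_T=\sqrt2\,\beta$ for $k\neq 0$, bound $\|\widehat{T}^{-1}\|$ by its Frobenius norm $\sqrt2\sqrt{2n-1}$, and finish with the arithmetic $\tfrac{8+6\sqrt2}{3}<\tfrac{11}{2}$. The only difference is cosmetic: where the paper cites a result of Lu to get $\widehat{\beta E_n}^{-1}=2\,\widehat{\beta E_n}$, you verify directly that $E_k=\Id-e^k(\one+e^k)^\top$ is an involution, which is the same fact proved self-containedly.
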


\begin{proof}
 From  Proposition \ref{prop:ebsimplexhessian}, we know
\begin{align}
\|\sh f(x^0;\beta \Id, \beta E_k)-\nabla^2 f(x^0)\| &\leq \frac{ n \sqrt{n}}{3}L_{\nabla^2 f}  \left ( 2\frac{\Delta_u}{\Delta_l}+3 \right ) \left \Vert (\widehat{\beta \Id}^\top) ^{-1}\right \Vert \left \Vert \widehat{\beta E_k}^{-1} \right \Vert \Delta_u, \label{eq:eb}
\end{align}
where $\widehat{\beta \Id}=\beta \Id/\Delta_{\beta \Id}, \widehat{\beta E_k}= \beta E_k/\Delta_{\beta E_k}, \Delta_u=\max \{\Delta_{\beta \Id}, \Delta_{\beta E_k}\},$ $\Delta_l=\min \{\Delta_{\beta \Id}, \Delta_{\beta E_k}\}.$ 

If $k=0$, then $\Delta_{\beta \Id}=\Delta_{\beta E_0}=\beta$ and $\left \Vert (\widehat{\beta \Id}^\top) ^{-1}\right \Vert  = \left \Vert \widehat{\beta E_0}^{-1} \right \Vert = 1$.  The result follows trivially.

If $k\in \{1, 2, \ldots, n\}$, then we have $\Delta_{\beta \Id}=\beta$ and $\Delta_{\beta E_k}=\sqrt{2} \beta$.  Without loss of generality, let $k=n$.  Note that 
$$\widehat{\beta \Id}=\Id ~~\mbox{and}~~ \widehat{\beta E_n}=\frac{1}{\sqrt{2}}\bbm \Id_{n-1} &\zero_{n-1 \times 1}\\ -\one_{1 \times n-1}&-1\ebm.$$
Using Corollary 3.2 in \cite{Lu2002}, we get that $\widehat{\beta E_n}^{-1}=2 \widehat{\beta E_n}.$ Substituting all  these specific values in \eqref{eq:eb}, we obtain
\begin{align*}
    \|\sh f(x^0;\beta \Id,\beta E_k)-\nabla^2 f(x^0)\| &\leq \frac{n^{\frac{3}{2}}}{3} L_{\nabla^2 f} \left (2\sqrt{2} +3 \right ) \left \Vert \widehat{\beta E_n}^{-1} \right \Vert \sqrt{2} \, \beta \\
    &\leq \frac{\sqrt{2} n^{\frac{3}{2}}}{3} \left ( 2\sqrt{2}+3\right ) L_{\nabla^2 f} \left \Vert \widehat{\beta E_n}^{-1} \right \Vert_F \beta \\
    &=\frac{2 n^{\frac{3}{2}}}{3} \left ( 2\sqrt{2}+3\right ) L_{\nabla^2 f} \sqrt{2n-1} \, \beta \\
    &<\left ( \frac{8+6\sqrt{2}}{3}\right )\, n^2 \, L_{\nabla^2 f} \beta\\
    &< \frac{11}{2}n^2 \, L_{\nabla^2 f} \beta.\qedhere
\end{align*}
\end{proof}

Note that the error bounds defined in Proposition \ref{prop:ebsimplexhessian} and Corollary \ref{cor:EBminsetforSHC} are factors of the Lipschitz constant $L_{\nabla^2 f}$. Hence, if $f$ is a quadratic function, then $L_{\nabla^2 f}=0$ and the error bounds are equal to zero. Therefore, if the assumptions of Proposition \ref{prop:ebsimplexhessian} hold and $f$ is a quadratic function, then  $\nabla^2_s f(x^0;S,T)$ is perfectly accurate.

\section{ The simplex calculus Hessian and the quadratic calculus Hessian}\label{sec:calculus}

This section contains the development of two approximation techniques of the Hessian, based on the calculus rules of the true Hessian. These novel approximation techniques are named simplex calculus Hessian and quadratic calculus Hessian. A product rule, a quotient rule and a power rule of functions are advanced. The simplex calculus Hessian and quadratic calculus Hessian provide alternative techniques to approximate the Hessian whenever more than one function is involved. These approximation techniques make it possible to consider the functions involved separately, rather than considering them all as one function, as this is the case for the nested-set Hessian. Once the rules are defined,  an error bound is provided for each rule. The error bounds developed show that every Hessian approximation technique in this section are order-1 Hessian accuracy  approximation depending on $\Delta_u,$ the maximal radius between the two sets of directions $S$ and $T$.  

All results use Proposition \ref{prop:ebsimplexhessian}.  In addition, we use the following bounds on gradient approximation
\begin{align*}
\|\nabla Q_f(x^0;S,T)(x^0)-\nabla f(x^0)\|&\leq\frac{3(1+\sqrt2)\sqrt p}{2}L_{\nabla^2f}\left\|\widehat{Q}^{-1}\right\|(\Delta_S+\Delta_T)^2,\qquad\cite[\mbox{Theorem 3}]{conn2008geometry}\nonumber\\
&\leq\frac{3(1+\sqrt2)\sqrt p}{2}L_{\nabla^2f}\left\|\widehat{Q}^{-1}\right\|(2\Delta_u)^2,
\end{align*}
and
\begin{align*}
    \Vert \nabla_s f(x^0;T)-\nabla f(x^0) \Vert &\leq \frac{\sqrt{k}}{2}L_{\nabla f} \Vert (T^\top)^\dagger \Vert \Delta_T,
    \qquad \mbox{[Proposition ~\ref{prop:GSGerror}]}\nonumber\\
    & \leq \frac{\sqrt{k}}{2}L_{\nabla f} \Vert (T^\top)^\dagger \Vert \Delta_u. 
    \end{align*}
These inspire the definitions
    $$E_{\nabla Q_f}:=6(1+\sqrt2)\sqrt pL_{\nabla^2f}\left\|\widehat{Q}^{-1}\right\|, \quad
    E_{\nabla_s f}:=\frac{\sqrt{k}}{2}L_{\nabla f} \Vert (T^\top)^\dagger \Vert,$$
and 
$$ E_{\nabla^2_s f}=\frac{ m \sqrt{k}}{3}L_{\nabla^2 f}  \left ( 2\frac{\Delta_u}{\Delta_l}+3 \right ) \left \Vert (\widehat{S}^\top) ^\dagger\right \Vert \left \Vert \widehat{T}^\dagger \right \Vert.$$
Observe that all of the error bounds used have a factor of at least $\Delta_u$, which allows control over the error bounds.

Before introducing the simplex calculus Hessian, Table \ref{ruletable} presents the calculus rules for the true Hessians of the types of functions that are of interest.\bigskip\\
\begin{table}[H]
\scalebox{0.97}{
\begin{tabular}{|l|l|}\hline
\textbf{Calculus rule}&\textbf{Equation}\\\hline
Product Rule&$\nabla^2(fg)=(\nabla^2f)g+\nabla f(\nabla g)^\top+\nabla g(\nabla f)^\top+(\nabla^2g)f$\\\hline
Quotient Rule&$\nabla^2\left(\frac{f}{g}\right)=\frac{1}{g^3}[g^2(\nabla^2f-f\nabla^2g)+2f\nabla g(\nabla g)^\top-g(\nabla f(\nabla g)^\top+\nabla g(\nabla f)^\top)]$\\\hline
Power Rule&$\nabla^2(f^p)=pf^{p-1}\nabla^2f+p(p-1)f^{p-2}\nabla f(\nabla f)^\top$\\\hline
\end{tabular}}\caption{Calculus rules for the Hessian.}\label{ruletable}
\end{table} 

We define the simplex calculus Hessians in Table \ref{table:ruletable2} to be analogous to the true Hessians and use the notation $\nabla^2_{sc}(\cdot)(x^0;S,T)$.
\begin{table}[H] 
\scalebox{0.92}{
\begin{tabular}{|l|l|}\hline
\textbf{Calculus rule}&\textbf{Equation}\\\hline
Product Rule&$\nabla^2_{sc} (fg)=(\nabla^2_{s} f)g+\nabla_{s}f(\nabla_sg)^\top+\nabla_sg(\nabla_sf)^\top+(\nabla^2_sg)f$\\\hline
Quotient Rule&$\nabla^2_{sc}\left(\frac{f}{g}\right)=\frac{1}{g^3}[g^2(\nabla^2_sf-f\nabla^2_sg)+2f\nabla_sg(\nabla_sg)^\top-g(\nabla_sf(\nabla_sg)^\top+\nabla_sg(\nabla_sf)^\top)]$\\\hline
Power Rule&$\nabla^2_{sc}(f^p)=pf^{p-1}\nabla^2_sf+p(p-1)f^{p-2}\nabla_sf(\nabla_sf)^\top$\\\hline
 \end{tabular}}\caption{Calculus rules for the simplex calculus Hessian.} \label{table:ruletable2}\end{table}

An alternative  approximation technique can be defined by replacing all the generalized simplex gradients $\nabla_s f(x^0;T)$ and $\nabla_s g(x^0;T)$ in Table \ref{table:ruletable2} by $\nabla Q_f(x^0;S,T)$ and $\nabla Q_g(x^0;S,T)$, respectively. This gives rise to the quadratic calculus Hessian, denoted $\nabla^2_{qc}(\cdot)(x^0;S,T).$  Naturally, the quadratic calculus Hessian requires the construction of the quadratic interpolations, for instance $Q_f (x^0;S,T)$ and $Q_g(x^0;S,T)$ in the product rule. Notice that using  $\nabla Q_f(x^0;S,T)$ and $\nabla Q_g(x^0;S,T)$ instead of $\nabla_s f(x^0;T)$ and $\nabla_s g(x^0;T) $ in the formulae presented in Table \ref{table:ruletable2}  does not require more function evaluations. Indeed, building $Q_f(x^0;S,T)$ reuses the  same function evaluations involved in the computation of $\nabla^2_s f(x^0;S,T)$ and similarly for $Q_g(x^0;S,T)$. Therefore, the computational cost between these two Hessian approximation techniques in terms of function evaluations is equal. The  formulae for the quadratic calculus Hessian are presented in Table \ref{table:ruletable3}.
\begin{table}[H] 
\scalebox{0.88}{
\begin{tabular}{|l|l|}\hline
\textbf{Calculus rule}&\textbf{Equation}\\\hline
Product Rule&$\nabla^2_{qc} (fg)=(\nabla^2_{s} f)g+\nabla Q_f(\nabla Q_g)^\top+\nabla Q_g(\nabla Q_f)^\top+(\nabla^2_sg)f$\\\hline
Quotient Rule&$\nabla^2_{qc}\left(\frac{f}{g}\right)=\frac{1}{g^3}[g^2(\nabla^2_sf-f\nabla^2_sg)+2f\nabla Q_g(\nabla Q_g)^\top-g(\nabla Q_f(\nabla Q_g)^\top+\nabla Q_g(\nabla Q_f)^\top)]$\\\hline
Power Rule&$\nabla^2_{qc}(f^p)=pf^{p-1}\nabla^2_sf+p(p-1)f^{p-2}\nabla Q_f(\nabla Q_f)^\top$\\\hline
\end{tabular}}\caption{Calculus rules for the quadratic calculus Hessian.} \label{table:ruletable3}\end{table}

We now present error bounds for the product rule of the simplex calculus Hessian and the quadratic calculus Hessian as a proposition with full proof.  We list the remaining error bounds as corollaries; the proofs of all rules follow the same method.

\begin{prop}[Product Rule error bound]\label{prop:productrule}Let $f,g:\R^n\to\R$ be $\mathcal{C}^3$ on $B(x^0;\overline{\Delta})$, where $x^0\in\dom f\cap\dom g$ is the point of interest. Let $S=[s^1~~s^2~~\cdots~~s^m]\in\R^{n\times m}$ and $T=[t^1~~t^2~~\cdots~~t^k]\in\R^{n\times k}$ be full row rank. Let $B(x^0+s^i;\Delta_T)\subset B(x^0;\overline{\Delta})$ for all $i\in\{1,2,\ldots,m\}$. Then
\begin{equation}\label{producterror}
\|\nabla^2_{sc}(fg)(x^0;S,T)-\nabla^2(fg)(x^0)\|\leq(E_{\nabla^2_s f}|g(x^0)|+E_{\nabla^2_sg}|f(x^0)|+2M^s_{fg})\Delta_u,
\end{equation}
where\begin{equation*}
M^s_{fg}=\min\left\{\begin{array}{l}E_{\nabla_s f}E_{\nabla_s g}\Delta_u+E_{\nabla_s g}\|\nabla f(x^0)\|+E_{\nabla_s f}\|\nabla g(x^0)\|,\\E_{\nabla_s f}\|\nabla_s g(x^0;T)\|+E_{\nabla_s g}\|\nabla f(x^0)\|,\\E_{\nabla_s g}\|\nabla_s f(x^0;T)\|+E_{\nabla_s f}\|\nabla g(x^0)\|\end{array}\right\}.
\end{equation*}
Moreover, if $\s(x^0;S,T)$ is poised for quadratic interpolation, then
\begin{equation}
\label{eq:qchproducterror}\|\nabla^2_{qc}(fg)(x^0;S,T)-\nabla^2(fg)(x^0)\|
    \leq\left(E_{\nabla^2_s f}|g(x^0)|+E_{\nabla^2_sg}|f(x^0)|+2M^q_{fg}\Delta_u \right)\Delta_u,
\end{equation}
where
\begin{equation}\label{eq:Mqfg}
M^q_{fg}=\min\left\{\begin{array}{l}
    E_{\nabla Q_f}E_{\nabla Q_g}\Delta_u^2+E_{\nabla Q_g}\|\nabla f(x^0)\|+E_{\nabla Q_f}\|\nabla g(x^0)\|,\\
    E_{\nabla Q_f}\|\nabla Q_g(x^0;S,T)(x^0)\|+E_{\nabla Q_g}\|\nabla f(x^0)\|,\\
    E_{\nabla Q_g}\|\nabla Q_f(x^0;S,T)(x^0)\|+E_{\nabla Q_f}\|\nabla g(x^0)\|\end{array}\right\}.
\end{equation}
\end{prop}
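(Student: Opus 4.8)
The plan is to reduce the error of the approximate product rule to the errors of its constituent pieces by a telescoping/triangle-inequality argument, then invoke the already-established bounds. Writing $\nabla^2(fg)(x^0)=(\nabla^2f(x^0))g(x^0)+\nabla f(x^0)(\nabla g(x^0))^\top+\nabla g(x^0)(\nabla f(x^0))^\top+(\nabla^2g(x^0))f(x^0)$ from Table \ref{ruletable} and the corresponding expression for $\nabla^2_{sc}(fg)(x^0;S,T)$ from Table \ref{table:ruletable2}, I would subtract the two and group the difference into four blocks: the two ``Hessian-times-value'' blocks $(\nabla^2_sf-\nabla^2f)g(x^0)$ and $(\nabla^2_sg-\nabla^2g)f(x^0)$, and the two symmetric ``outer-product'' blocks $\nabla_sf(\nabla_sg)^\top-\nabla f(\nabla g)^\top$ and its transpose. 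The first two blocks are handled immediately by Proposition \ref{prop:ebsimplexhessian}: their norms are at most $E_{\nabla^2_sf}|g(x^0)|\Delta_u$ and $E_{\nabla^2_sg}|f(x^0)|\Delta_u$ respectively (note $f,g$ are evaluated exactly at $x^0$, so no error enters through the scalar factors).

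The core of the work is bounding $\|\nabla_sf(x^0;T)(\nabla_sg(x^0;T))^\top-\nabla f(x^0)(\nabla g(x^0))^\top\|$, which by symmetry also controls its transpose (hence the factor $2$ in \eqref{producterror}). I would insert and subtract a mixed term, e.g.\ $\nabla_sf(\nabla g)^\top$, to get $\|(\nabla_sf-\nabla f)(\nabla g)^\top\|+\|\nabla_sf(\nabla_sg-\nabla g)^\top\|\leq E_{\nabla_sf}\Delta_u\|\nabla g(x^0)\|+\|\nabla_sf(x^0;T)\|\,E_{\nabla_sg}\Delta_u$ using $\|uv^\top\|=\|u\|\|v\|$ and Proposition \ref{prop:GSGerror}; this gives the second expression in the $\min$ defining $M^s_{fg}$. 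Splitting the other way (subtract $\nabla f(\nabla_sg)^\top$ instead) gives the third; and bounding $\|\nabla_sf(x^0;T)\|\leq\|\nabla f(x^0)\|+E_{\nabla_sf}\Delta_u$ in the second option and substituting gives the first. Since all three chains of inequalities are valid simultaneously, the true bound is the minimum, yielding \eqref{producterror} after collecting the common factor $\Delta_u$.

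For the quadratic calculus Hessian bound \eqref{eq:qchproducterror}, the structure is identical except that $\nabla_sf,\nabla_sg$ are replaced by $\nabla Q_f(x^0;S,T)(x^0),\nabla Q_g(x^0;S,T)(x^0)$ in the outer-product blocks (the Hessian-times-value blocks are unchanged, still using $\nabla^2_sf,\nabla^2_sg$). The poisedness hypothesis on $\s(x^0;S,T)$ is what guarantees the quadratic interpolants $Q_f,Q_g$ exist and are well-defined, so that the gradient error bound from \cite{conn2008geometry} applies; this bound carries a factor $(2\Delta_u)^2$, i.e.\ it is order-$2$ in $\Delta_u$. Repeating the three-way split with $E_{\nabla Q_f},E_{\nabla Q_g}$ in place of $E_{\nabla_sf},E_{\nabla_sg}$ produces $M^q_{fg}$ exactly as in \eqref{eq:Mqfg}, and because each of those gradient errors already contributes one factor of $\Delta_u^2$ rather than $\Delta_u$, the whole mixed term scales like $\Delta_u^2$, which is why $M^q_{fg}$ is multiplied by an extra $\Delta_u$ in \eqref{eq:qchproducterror}.

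The only mildly delicate point — and the one I would be most careful about — is the bookkeeping of \emph{where} each function is evaluated: in both tables the scalar prefactors $g,f$ are the \emph{exact} values $g(x^0),f(x^0)$, so no approximation error is introduced through them, and one must not conflate $\nabla_sf(x^0;T)$ (an approximation, appearing inside $M^s_{fg}$) with $\nabla f(x^0)$ (exact, appearing in the final displayed bounds). Getting the three members of the $\min$ to come out precisely as stated requires choosing the insertion point of the mixed term consistently and, for the first member, using the crude bound $\|\nabla_sf(x^0;T)\|\le\|\nabla f(x^0)\|+E_{\nabla_sf}\Delta_u$ (and its $g$-analogue) to trade the approximate norms for exact ones plus an $O(\Delta_u)$ correction — which is what turns the product $E_{\nabla_sf}\|\nabla_sg\|+\cdots$ into $E_{\nabla_sf}E_{\nabla_sg}\Delta_u+E_{\nabla_sg}\|\nabla f(x^0)\|+E_{\nabla_sf}\|\nabla g(x^0)\|$. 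No genuine obstacle is expected; it is a structured triangle-inequality argument resting entirely on Propositions \ref{prop:GSGerror} and \ref{prop:ebsimplexhessian} and the \cite{conn2008geometry} gradient bound.
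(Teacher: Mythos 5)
Your proposal is correct and follows essentially the same route as the paper's proof: decompose the difference via the product rules into the two Hessian-times-value blocks (handled by Proposition \ref{prop:ebsimplexhessian}) and twice the outer-product discrepancy, then bound the latter by the two insertion splits and by the combined split involving $(\nabla_s f-\nabla f)(\nabla_s g-\nabla g)^\top$, taking the minimum; your derivation of the first member of the $\min$ by substituting $\|\nabla_s f\|\le\|\nabla f\|+E_{\nabla_s f}\Delta_u$ is algebraically equivalent to the paper's direct expansion. The only cosmetic differences are that you work the simplex-calculus case explicitly and cite symmetry for the quadratic-calculus case (the paper does the reverse) and that your labels for the second and third members of the $\min$ are swapped, which does not affect correctness.
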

\begin{proof}The error bounds for both Hessian approximation techniques are obtained using an identical process. For this reason, we prove only \eqref{eq:qchproducterror} and \eqref{eq:Mqfg}. To save space in the calculations that follow, we use the shorthand $\nabla^2f(x^0)=\nabla^2f$, $\nabla_s^2f(x^0;S,T)=\nabla^2_sf,$ $\nabla Q_f(x^0;S,T)(x^0)= \nabla Q_f$ and similar, as $x^0,S$ and $T$ are fixed and there is no ambiguity. By using the triangle inequality of norms and substituting the product rules from Tables \ref{ruletable} and \ref{table:ruletable3} into the norm on the left-hand side of \eqref{producterror}, we see that it is bounded as follows:
\begin{align}
&\|\nabla^2_s(fg)-\nabla^2(fg)\|\nonumber\\
=&\scalemath{0.91}{\|g(x^0)(\nabla^2_sf-\nabla^2f)+f(x^0)(\nabla^2_sg-\nabla^2g)+\nabla Q_f(\nabla Q_g)^\top-\nabla f(\nabla g)^\top+\nabla Q_g(\nabla Q_f)^\top-\nabla g(\nabla f)^\top\|}\nonumber\\
\leq&\scalemath{0.97}{|g(x^0)|E_{\nabla^2_sf}\Delta_u+|f(x^0)|E_{\nabla^2_sg}\Delta_u+\|\nabla Q_f(\nabla Q_g)^\top-\nabla f(\nabla g)^\top\|+\|\nabla Q_g(\nabla Q_f)^\top-\nabla g(\nabla f)^\top\|}\nonumber\\
=&(|g(x^0)|E_{\nabla^2_sf}+|f(x^0)|E_{\nabla^2_sg})\Delta_u+2\|\nabla Q_f(\nabla Q_g)^\top-\nabla f(\nabla g)^\top\|.\label{ineq2}
\end{align}
Now we want to eliminate the term $2\|\nabla Q_f(\nabla Q_g)^\top-\nabla f(\nabla g)^\top\|$ from \eqref{ineq2}. The first method makes use of the product $(\nabla Q_f-\nabla f)(\nabla Q_g-\nabla g)^\top$:
\begin{align*}
(\nabla Q_f-\nabla f)(\nabla Q_g-\nabla g)^\top&=\nabla Q_f(\nabla Q_g)^\top-\nabla f(\nabla Q_g)^\top-\nabla Q_f(\nabla g)^\top+\nabla f(\nabla g)^\top\\
&=\scalemath{0.91}{\nabla Q_f(\nabla Q_g)^\top-\nabla f(\nabla g)^\top+2\nabla f(\nabla g)^\top-\nabla f(\nabla Q_g)^\top-\nabla Q_f(\nabla g)^\top.}
\end{align*}
Hence, 
\begin{align*}
&\|\nabla Q_f(\nabla Q_g)^\top-\nabla f(\nabla g)^\top\|\\
&\quad=\|(\nabla Q_f-\nabla f)(\nabla Q_g-\nabla g)^\top-2\nabla f(\nabla g)^\top+\nabla f(\nabla Q_g)^\top+\nabla Q_f(\nabla g)^\top\|\\
&\quad=\|(\nabla Q_f-\nabla f)(\nabla Q_g-\nabla g)^\top+\nabla f(\nabla Q_g-\nabla g)^\top+(\nabla Q_f-\nabla f)(\nabla g)^\top\|\\
&\quad\leq E_{\nabla Q_f}E_{\nabla Q_g}\Delta_u^4+\|\nabla f\|E_{\nabla Q_g}\Delta_u^2+\|\nabla g\|E_{\nabla Q_f}\Delta_u^2\\
&\quad=(E_{\nabla Q_f}E_{\nabla Q_g}\Delta_u^3+\|\nabla f\|E_{\nabla Q_g}\Delta_u+\|\nabla g\|E_{\nabla Q_f}\Delta_u)\Delta_u.
\end{align*}
Thus, the right-hand side of \eqref{ineq2} becomes
\begin{equation*}
\left [|g(x^0)|E_{\nabla^2_s f}+|f(x^0)|E_{\nabla^2_s g}+2(E_{\nabla Q_f}E_{\nabla Q_g}\Delta_u^2+\|\nabla f\|E_{\nabla Q_g}+\|\nabla g\|E_{\nabla Q_f})\Delta_u \right ]\Delta_u,\end{equation*}whose expression in parentheses is the first expression in \eqref{eq:Mqfg}. The second method of eliminating $2\|\nabla Q_f(\nabla Q_g)^\top-\nabla f(\nabla g)^\top\|$ from \eqref{ineq2} is to add and subtract $\nabla f(\nabla Q_g)^\top$ to obtain
\begin{equation}\label{ineq4}
\nabla Q_f(\nabla Q_g)^\top-\nabla f(\nabla g)^\top=(\nabla Q_f-\nabla f)(\nabla Q_g)^\top+\nabla f(\nabla Q_g-\nabla g)^\top.
\end{equation}
Substituting \eqref{ineq4} into \eqref{ineq2} and applying the triangle inequality yields another right-hand side of \eqref{ineq2}:
\begin{equation*}
[|g(x^0)|E_{\nabla^2_s f}+|f(x^0)|E_{\nabla^2_s g}+2(\|\nabla Q_g\|E_{\nabla Q_f}+\|\nabla f\|E_{\nabla Q_g})\Delta_u]\Delta_u,
\end{equation*}
whose expression in parentheses is the second expression in \eqref{eq:Mqfg}. Finally, adding and subtracting $\nabla Q_f(\nabla g)^\top$ to \eqref{ineq2} gives
\begin{equation*}
\nabla Q_f(\nabla Q_g)^\top-\nabla f(\nabla g)^\top=\nabla Q_f(\nabla Q_g-\nabla g)^\top+(\nabla Q_f-\nabla f)(\nabla g)^\top,
\end{equation*}
which we substitute into the right-hand side of \eqref{ineq2} to find
\begin{equation*}
[|g(x^0)|E_{\nabla^2_sf}+|f(x^0)|E_{\nabla^2_s g}+2(\|\nabla Q_f\|E_{\nabla Q_g}+\|\nabla g\|E_{\nabla Q_f})\Delta_u]\Delta_u,
\end{equation*}
whose expression in parentheses is the third expression in \eqref{eq:Mqfg}. Therefore, \eqref{eq:qchproducterror} holds.
\end{proof}
\begin{note}
Proposition \ref{prop:productrule} can be extended to the product of any finite number of functions by induction: $f_1f_2f_3:=fg$ with $f=f_1$ and $g=f_2f_3$, etc. We leave this for future consideration.
\end{note}
Analyzing the error bound in \eqref{producterror}, we see that if $f$ and $g$ are linear functions, then the error bound is equal to 0. Therefore, the  simplex calculus Hessian $\nabla_{sc}^2(fg)(x^0;S,T)$ is perfectly accurate in this case. Analyzing the error bound in \eqref{eq:qchproducterror}, we see that if $f$ and $g$ are quadratic functions, then the error bound is equal to zero. Hence, the quadratic calculus Hessian $\nabla_{qc}^2(fg)(x^0;S,T)$ is perfectly accurate in this case. Comparing to the nested-set Hessian $\nabla_s^2(fg)(x^0;S,T),$ we see that it is a major improvement. Indeed, the error bound  for the nested-set Hessian in \eqref{eq:ebgsh} reduces to zero whenever $F=fg$ is a quadratic function.\par
It is worth mentioning that if a minimal poised set for NSHC  $\m(x^0;S,U_k)$ is used, then we know that $S$ and $U_k$ have full row rank. We also know by Proposition \ref{prop:minimalsetispoisedforqi} that any minimal poised set for NSHC of the form $\m(x^0;S,U_k)$ is poised for quadratic interpolation. Therefore, the error bounds for both techniques, that is the simplex calculus Hessian and the quadratic calculus Hessian, are valid (assuming the assumptions on $f$ and $g$ presented in Proposition \ref{prop:productrule} are satisifed).

\begin{cor}[Quotient Rule error bound]\label{prop:quotientrule}
Let $f,g:\R^n\to\R$ be $\mathcal{C}^3$ on $B(x^0;\overline{\Delta})$, where $x^0\in\dom f\cap\dom g$ is the point of interest and $g(x^0)\neq0$. Let $S=[s^1~~s^2~~\cdots~~s^m]\in\R^{n\times m}$ and $T=[t^1~~t^2~~\cdots~~t^k]\in\R^{n\times k}$ be full row rank. Let $B(x^0+s^i;\Delta_T)\subset B(x^0;\overline{\Delta})$ for all $i\in\{1,2,\ldots,m\}$. Then
\begin{align}\label{quotienterror}
&\left\|\nabla^2_{sc}\left(\frac{f}{g}\right)(x^0;S,T)-\nabla^2\left(\frac{f}{g}\right)(x^0)\right\|, \notag \\\leq&\frac{1}{|g^3(x^0)|}\left[E_{\nabla^2_s f}|g^2(x^0)|+E_{\nabla^2_s g}|f(x^0)g^2(x^0)|+2M^s_{\frac{f}{g}}(|f(x^0)|+|g(x^0)|)\right]\Delta_u, 
\end{align}
where\begin{equation*}
M^s_{\frac{f}{g}}=\min\left\{\begin{array}{l}E_{\nabla_s f}E_{\nabla_s g}\Delta_u+E_{\nabla_s g}\|\nabla f(x^0)\|+E_{\nabla_s f}\|\nabla g(x^0)\|,\\E_{\nabla_s f}\|\nabla_s g(x^0;T)\|+E_{\nabla_s g}\|\nabla f(x^0)\|,\\E_{\nabla_s g}\|\nabla_s f(x^0;T)\|+E_{\nabla_s f}\|\nabla g(x^0)\|,\\E_{\nabla^2_s g}\Delta_u+2E_{\nabla_s g}\|\nabla g(x^0)\|, \\E_{\nabla_s g}\|\nabla_s g(x^0;T)\|+E_{\nabla_s g}\|\nabla g(x^0)\|\|\\\end{array}\right\}.
\end{equation*}
Moreover, 
\begin{align}\label{eq:quotienterrorqch}
&\left\|\nabla^2_{qc}\left(\frac{f}{g}\right)(x^0;S,T)-\nabla^2\left(\frac{f}{g}\right)(x^0)\right\|,\notag \\\leq&\frac{1}{|g^3(x^0)|}\left[E_{\nabla^2_s f}|g^2(x^0)|+E_{\nabla^2_s g}|f(x^0)g^2(x^0)|+2M^q_{\frac{f}{g}}(|f(x^0)|+|g(x^0)|)\Delta_u\right]\Delta_u, 
\end{align}
where\begin{equation*}
M^q_{\frac{f}{g}}=\min\left\{\begin{array}{l}
    E_{\nabla Q_f}E_{\nabla Q_g}\Delta_u^2+E_{\nabla Q_g}\|\nabla f(x^0)\|+E_{\nabla Q_f}\|\nabla g(x^0)\|,\\
    E_{\nabla Q_f}\|\nabla Q_g(x^0;S,T)\|+E_{\nabla Q_g}\|\nabla f(x^0)\|,\\
    E_{\nabla Q_g}\|\nabla Q_f(x^0;S,T)\|+E_{\nabla Q_f}\|\nabla g(x^0)\|,\\
    E_{\nabla^2_s g}+2E_{\nabla Q_g}\|\nabla g(x^0)\|, \\
    E_{\nabla Q_g}\|\nabla Q_g(x^0;S,T)\|+E_{\nabla Q_g}\|\nabla g(x^0)\|
    \end{array}\right\}.
\end{equation*}
\end{cor}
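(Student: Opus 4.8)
The plan is to mirror, essentially verbatim, the structure of the proof of Proposition \ref{prop:productrule}, since the quotient rule error bound is algebraically more involved but conceptually identical. First I would state that, as in the product rule, the simplex calculus and quadratic calculus versions are handled by an identical argument, so it suffices to prove \eqref{eq:quotienterrorqch} and the expression for $M^q_{\frac{f}{g}}$; the simplex version \eqref{quotienterror} follows by replacing each $\nabla Q_f$, $\nabla Q_g$ by $\nabla_s f(x^0;T)$, $\nabla_s g(x^0;T)$ and each error constant $E_{\nabla Q_\bullet}$ by $E_{\nabla_s \bullet}$, with $\Delta_u^2$ becoming $\Delta_u$ in the appropriate places. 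I would adopt the same shorthand ($\nabla^2 f$ for $\nabla^2 f(x^0)$, $\nabla Q_f$ for $\nabla Q_f(x^0;S,T)(x^0)$, etc.) and freely use $g(x^0)\neq 0$.

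Next I would subtract the quadratic calculus quotient formula in Table \ref{table:ruletable3} from the true quotient rule in Table \ref{ruletable}, factor out $\frac{1}{g^3(x^0)}$, and apply the triangle inequality to split the error into four groups of terms: (i) the $g^2(x^0)(\nabla^2_s f - \nabla^2 f)$ term, bounded by $|g^2(x^0)|\,E_{\nabla^2_s f}\Delta_u$ via Proposition \ref{prop:ebsimplexhessian}; (ii) the $f(x^0)g^2(x^0)(\nabla^2_s g - \nabla^2 g)$ term, bounded by $|f(x^0)g^2(x^0)|\,E_{\nabla^2_s g}\Delta_u$; (iii) the $2f(x^0)$ times $\bigl(\nabla Q_g(\nabla Q_g)^\top - \nabla g(\nabla g)^\top\bigr)$ term; and (iv) the $-g(x^0)$ times $\bigl(\nabla Q_f(\nabla Q_g)^\top + \nabla Q_g(\nabla Q_f)^\top - \nabla f(\nabla g)^\top - \nabla g(\nabla f)^\top\bigr)$ term. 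Groups (iii) and (iv) are the two kinds of ``outer-product difference'' already analyzed in the product rule proof: the difference $\nabla Q_f(\nabla Q_g)^\top - \nabla f(\nabla g)^\top$ admits the three bounds recorded in \eqref{eq:Mqfg}, and by the same add-and-subtract tricks the ``diagonal'' difference $\nabla Q_g(\nabla Q_g)^\top - \nabla g(\nabla g)^\top$ admits the two additional bounds appearing as the fourth and fifth entries of $M^q_{\frac{f}{g}}$ (using $E_{\nabla^2_s g}$ to absorb one copy of $\|\nabla Q_g - \nabla g\|$ when convenient — this is where the $E_{\nabla^2_s g}$ entry comes from, since $\|\nabla Q_g-\nabla g\|\le$ something controlled, but more directly one can bound $\|\nabla Q_g(\nabla Q_g)^\top-\nabla g(\nabla g)^\top\|$ crudely by the $\nabla^2_s g$ error combined with a gradient error). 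Collecting the factor $|f(x^0)|+|g(x^0)|$ out front of the minimum gives the stated form, where each outer-product estimate contributes one extra factor of $\Delta_u$ relative to the simplex case because $E_{\nabla Q_\bullet}$ carries $\Delta_u^2$ rather than $\Delta_u$.

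The main obstacle I anticipate is purely bookkeeping: tracking the powers of $\Delta_u$ and the scalar prefactors $|f(x^0)|$, $|g(x^0)|$, $|g^2(x^0)|$, $|g^3(x^0)|$ through the triangle-inequality splitting so that the final expression collapses exactly to the claimed $\frac{1}{|g^3(x^0)|}\bigl[E_{\nabla^2_s f}|g^2(x^0)| + E_{\nabla^2_s g}|f(x^0)g^2(x^0)| + 2 M^q_{\frac{f}{g}}(|f(x^0)|+|g(x^0)|)\Delta_u\bigr]\Delta_u$, and in particular verifying that the five candidate expressions inside $M^q_{\frac{f}{g}}$ each arise from a legitimate choice in grouping (iii) and (iv) — one must be slightly careful that the bound chosen for the ``diagonal'' term (iii) is compatible with the bound chosen for the ``mixed'' term (iv), i.e. that taking the minimum entrywise is justified because each entry corresponds to one self-consistent way of distributing the add-subtract terms. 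Since there is no new analytic content beyond Proposition \ref{prop:ebsimplexhessian}, Proposition \ref{prop:GSGerror}, and the gradient bound \cite[Theorem 3]{conn2008geometry} already invoked, the proof is a corollary in the genuine sense, and I would simply write ``The proof follows the same method as Proposition \ref{prop:productrule}'' after indicating the splitting above, rather than grinding through every term.
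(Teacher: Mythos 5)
Your proposal matches the paper exactly in spirit: the paper gives no separate argument for this corollary, stating only that ``the proofs of all rules follow the same method'' as Proposition \ref{prop:productrule}, and your expansion --- subtract the Table \ref{table:ruletable3} formula from the true quotient rule, pull out $\frac{1}{g^3(x^0)}$, bound the two Hessian-difference terms by Proposition \ref{prop:ebsimplexhessian}, and treat the mixed and diagonal outer-product differences with the same add-and-subtract estimates that generate the entries of $M^q_{\frac{f}{g}}$ --- is precisely that method. Your bookkeeping caveat about which entry of the minimum pairs with which outer-product term is a fair observation about the statement itself, not a defect of your argument, so no changes are needed.
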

Once again, we  see from \eqref{quotienterror} that if $f$ and $g$ are affine functions, then $\nabla_{sc}^2 \left(\frac{f}{g}\right)(x^0;S,T)$ is perfectly accurate. From \eqref{eq:quotienterrorqch}, we see that if $f$ and $g$ are quadratic functions, then  $\nabla_{qc}^2\left(\frac{f}{g}\right)(x^0;S,T)$ is perfectly accurate.
\begin{cor}[Power Rule error bound]\label{prop:powerrule}Let $f:\R^n\to\R$ be $\mathcal{C}^3$ on $B(x^0;\overline{\Delta})$, where $x^0\in\dom f$ is the point of interest. Let $S=[s^1~~s^2~~\cdots~~s^m]\in\R^{n\times m}$  and $T=[t^1~~t^2~~\cdots~~t^k]\in\R^{n\times k}$ be full row rank. Let $B(x^0+s^i;\Delta_T)\subset B(x^0;\overline{\Delta})$ for all $i\in\{1,2,\ldots,m\}$. Then for any $p\in\N\setminus\{1\}$,
\begin{equation}\label{powererror}
\|\nabla^2_{sc} f^p(x^0;S,T)-\nabla^2f^p(x^0)\|\leq\left(pE_{\nabla^2_s f}|f^{p-1}(x^0)|+p(p-1)E_{\nabla_s f}|f^{p-2}(x^0)|M_{f^p}\right)\Delta_u,
\end{equation}
where
\begin{equation*}
M^q_{f^p}=\min\{E_{\nabla_s f}\Delta_u+2\|\nabla f(x^0)\|,\|\nabla_s f(x^0)\|+\|\nabla f(x^0)\|\}.
\end{equation*}
Moreover,
\begin{equation*}
\|\nabla^2_{qc} f^p(x^0;S,T)-\nabla^2f^p(x^0)\|\leq\left(p E_{\nabla^2_sf}|f^{p-1}(x^0)|+p(p-1) 
    E_{\nabla Q_f}|f^{p-2}(x^0)|M_{f^p} \Delta_u\right)\Delta_u,
\end{equation*}
where
\begin{equation*}
M_{f^p}=\min\{E_{\nabla Q_f}\Delta_u^2+2\|\nabla f(x^0)\|,\|\nabla Q_f(x^0)\|+\|\nabla f(x^0)\|\}.
\end{equation*}
\end{cor}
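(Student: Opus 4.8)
The plan is to mimic the proof of Proposition~\ref{prop:productrule}, since the power rules in Tables~\ref{ruletable}, \ref{table:ruletable2} and \ref{table:ruletable3} are exactly the product rules with the two functions identified ($g=f$), up to the scalar coefficients $p$, $p(p-1)$ and the powers $f^{p-1}$, $f^{p-2}$. Adopting the shorthand of Proposition~\ref{prop:productrule} ($\nabla^2 f$ for $\nabla^2 f(x^0)$, $\nabla_s f$ for $\nabla_s f(x^0;T)$, $\nabla_s^2 f$ for $\nabla^2_s f(x^0;S,T)$), I would subtract the true power rule from the simplex calculus power rule and regroup to obtain
\begin{equation*}
\nabla^2_{sc}f^p-\nabla^2 f^p = p\,f^{p-1}(x^0)\left(\nabla^2_s f-\nabla^2 f\right)+p(p-1)f^{p-2}(x^0)\left(\nabla_s f(\nabla_s f)^\top-\nabla f(\nabla f)^\top\right).
\end{equation*}
Applying the triangle inequality and submultiplicativity of the norm reduces the problem to bounding $\|\nabla^2_s f-\nabla^2 f\|$ and $\|\nabla_s f(\nabla_s f)^\top-\nabla f(\nabla f)^\top\|$ separately.

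The first quantity is bounded by $E_{\nabla^2_s f}\Delta_u$ via Proposition~\ref{prop:ebsimplexhessian}. For the second I would import the two algebraic manipulations used for the product rule, now with $g=f$. Writing $\nabla_s f(\nabla_s f)^\top-\nabla f(\nabla f)^\top = (\nabla_s f-\nabla f)(\nabla_s f-\nabla f)^\top+\nabla f(\nabla_s f-\nabla f)^\top+(\nabla_s f-\nabla f)(\nabla f)^\top$ and using $\|\nabla_s f-\nabla f\|\le E_{\nabla_s f}\Delta_u$ (Proposition~\ref{prop:GSGerror}) yields the bound $(E_{\nabla_s f}\Delta_u+2\|\nabla f(x^0)\|)E_{\nabla_s f}\Delta_u$; alternatively, writing $\nabla_s f(\nabla_s f)^\top-\nabla f(\nabla f)^\top = (\nabla_s f-\nabla f)(\nabla_s f)^\top+\nabla f(\nabla_s f-\nabla f)^\top$ yields $(\|\nabla_s f(x^0;T)\|+\|\nabla f(x^0)\|)E_{\nabla_s f}\Delta_u$. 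Both estimates share the factor $E_{\nabla_s f}\Delta_u$, so taking the minimum of the remaining factors produces $M_{f^p}$; multiplying back by the scalar coefficients $p|f^{p-1}(x^0)|$ and $p(p-1)|f^{p-2}(x^0)|$ gives \eqref{powererror}.

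For the quadratic calculus Hessian the argument is identical with $\nabla_s f(x^0;T)$ replaced by $\nabla Q_f(x^0;S,T)(x^0)$ throughout (which requires $\s(x^0;S,T)$ to be poised for quadratic interpolation so that $Q_f$ is defined), the relevant gradient estimate now being $\|\nabla Q_f-\nabla f\|\le E_{\nabla Q_f}\Delta_u^2$ from the bound recorded at the start of Section~\ref{sec:calculus}; the extra $\Delta_u$ propagates through both manipulations, through $M_{f^p}$, and into the final estimate. I do not expect a genuine obstacle here — the whole proof is bookkeeping. The only mild subtleties are keeping the two cross-term bounds aligned so the common factor $E_{\nabla_s f}\Delta_u$ (respectively $E_{\nabla Q_f}\Delta_u^2$) can be pulled out cleanly before taking the $\min$, and observing that the hypothesis $p\in\N\setminus\{1\}$ is exactly what makes the coefficient $f^{p-2}(x^0)$ meaningful, since for $p=1$ the power rule degenerates to $\nabla^2 f$ and no gradient approximation enters.
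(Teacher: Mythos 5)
Your proposal is correct and matches the paper's intent exactly: the paper states the power rule as a corollary whose proof "follows the same method" as Proposition~\ref{prop:productrule}, and your argument is precisely that specialization with $g=f$, using Proposition~\ref{prop:ebsimplexhessian} for the Hessian term and the two rank-one decompositions (with $\|\nabla_s f-\nabla f\|\le E_{\nabla_s f}\Delta_u$, respectively $\|\nabla Q_f-\nabla f\|\le E_{\nabla Q_f}\Delta_u^2$) to produce the minimum defining $M_{f^p}$. Your added remarks on poisedness for quadratic interpolation and on why $p\in\N\setminus\{1\}$ is the right hypothesis are accurate and consistent with the paper.
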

Again we see that if $f$ is a quadratic function, then $\nabla_{qc}^2f^p(x^0;S,T)$ is perfectly accurate.
 Note that the bounds for the simplex calculus Hessian (\eqref{producterror}, \eqref{quotienterror} and\eqref{powererror}) do not necessarily  equal zero when the functions involved are quadratic. Indeed, the bounds will equal zero if all functions involved are affine. For this reason, defining the generalized simplex calculus Hessian as in Table \ref{table:ruletable3} improves the approximation technique greatly.

\section{Conclusion}\label{sec:conc}
The nested-set Hessian provides a compact and simple formula for approximating the Hessian of a function $f$. We have seen that if a minimal poised set for NSHC is used, the number of  distinct function evaluations required  in the computation of the nested-set Hessian is $(n+1)(n+2)/2.$ We investigated the relationship between a minimal poised set for NSHC   and poisedness for quadratic interpolation, proving that a minimal poised set for NSHC of the form $\m(x^0;S,U_k)$ is well-poised for quadratic interpolation, but that the converse does not necessarily hold. In Section \ref{sec:qishc}, we also developed formulae for obtaining the parameters of the quadratic interpolation function of $f$ over a minimal poised set for NSHC of the form $\m(x^0;S,U_k)$. Building this quadratic interpolation does not require any new function evaluations, as it uses the same function evaluations involved in the computation of the nested-set Hessian. In Section \ref{sec:error}, we developed an error bound for the nested-set Hessian. The error bound shows that the nested-set Hessian is an order-1  Hessian accuracy approximation depending on $\Delta_u$, the maximal radius of the two sets involved. Analyzing the error bound, we  saw that if $f$ is quadratic, then the nested-set Hessian returns the true Hessian $\nabla^2f$. Finally, we presented two calculus-based Hessian approximation techniques in Section \ref{sec:calculus}. Both approaches are order-1 Hessian accuracy approximation depending on $\Delta_u$. We saw that the approximation technique named quadratic calculus Hessian requires a little more work, since the quadratic interpolation function  must be constructed. In return, the technique is perfectly accurate if the functions involved are quadratic, which is not necessarily the case for the simplex calculus Hessian.

An obvious direction for the advancement of this research is the application of the Hessian approximation techniques within model-based DFO algorithms. Now that we have  a calculus-based approximation technique  of the gradient (the technique was named simplex calculus gradient and introduced in \cite{MR4074016}) and a calculus-based approximation of the Hessian, it is now possible to use these techniques in advanced algorithms  such as the model-based trust region method \cite[Chpt 11]{audet2017derivative}. For instance, for an optimization  problem involving the product of $k$ functions,  a model-based trust region algorithm using the calculus-based techniques could be compared to a model-based trust region algorithm that does not use any calculus rules.

Future research should also determine if the sets $U_0, U_1, \dots, U_n$ as defined in Proposition \ref{prop:reducingfe} are the only possible choices for $T \in \R^{n \times n}$ such that $\s(x^0;S,T),$ where $S \in \R^{n \times n}$ is full rank,  is a minimal poised set for NSHC at $x^0$.  It can be proven that it is indeed the case by using brute force in $\R$ and $\R^2,$ but it is still unclear how to generalized this claim in an arbitrary dimension $n$.
\bibliographystyle{plain.bst}
\bibliography{Bibliography}

\def\cprime{$'$}
\begin{thebibliography}{10}

\bibitem{amaioua2018efficient}
N.~Amaioua, C.~Audet, A.~Conn, and S.~Le~Digabel.
\newblock Efficient solution of quadratically constrained quadratic subproblems
  within the mesh adaptive direct search algorithm.
\newblock {\em European Journal of Operational Research}, 268(1):13--24, 2018.

\bibitem{audet2008nonsmooth}
C.~Audet, V.~B{\'e}chard, and S.~Le~Digabel.
\newblock Nonsmooth optimization through mesh adaptive direct search and
  variable neighborhood search.
\newblock {\em Journal of Global Optimization}, 41(2):299--318, 2008.

\bibitem{audet2017derivative}
C.~Audet and W.~Hare.
\newblock {\em Derivative-free and {B}lackbox {O}ptimization}.
\newblock Springer, 2017.

\bibitem{audet2018algorithmic}
C.~Audet and W.~Hare.
\newblock Algorithmic construction of the subdifferential from directional
  derivatives.
\newblock {\em Set-Valued and Variational Analysis}, 26(3):431--447, 2018.

\bibitem{audet2018mesh}
C.~Audet and C.~Tribes.
\newblock Mesh-based {N}elder--{M}ead algorithm for inequality constrained
  optimization.
\newblock {\em Computational Optimization and Applications}, 71(2):331--352,
  2018.

\bibitem{AudetIanniLeDigTribes2014}
Charles Audet, Andrea Ianni, S{\'e}bastien Le~Digabel, and Christophe Tribes.
\newblock Reducing the number of function evaluations in mesh adaptive direct
  search algorithms.
\newblock {\em SIAM Journal on Optimization}, 24(2):621--642, 2014.

\bibitem{bagirov2008discrete}
A.~Bagirov, B.~Karas{\"o}zen, and M.~Sezer.
\newblock Discrete gradient method: derivative-free method for nonsmooth
  optimization.
\newblock {\em Journal of Optimization Theory and Applications},
  137(2):317--334, 2008.

\bibitem{BBN2018}
A.~Berahas, R.~Byrd, and J.~Nocedal.
\newblock Derivative-free optimization of noisy functions via quasi-{Newton}
  methods.
\newblock {\em SIAM Journal on Optimization}, 2019.
\newblock To appear.

\bibitem{berghen2005condor}
F.~Berghen and H.~Bersini.
\newblock {CONDOR}, a new parallel, constrained extension of {P}owell's
  {UOBYQA} algorithm: {E}xperimental results and comparison with the {DFO}
  algorithm.
\newblock {\em Journal of Computational and Applied Mathematics},
  181(1):157--175, 2005.

\bibitem{bortz1998simplex}
D.~Bortz and C.~Kelley.
\newblock The simplex gradient and noisy optimization problems.
\newblock In {\em Computational {M}ethods for {O}ptimal {D}esign and
  {C}ontrol}, pages 77--90. Springer, 1998.

\bibitem{cocchi2018implicit}
G.~Cocchi, Giampaolo L., A.~Papini, and M.~Sciandrone.
\newblock An implicit filtering algorithm for derivative-free multiobjective
  optimization with box constraints.
\newblock {\em Computational Optimization and Applications}, 69(2):267--296,
  2018.

\bibitem{conn2008geometry}
A.~Conn, K.~Scheinberg, and L.~Vicente.
\newblock Geometry of interpolation sets in derivative free optimization.
\newblock {\em Math. Program.}, 111(1-2):141--172, 2008.

\bibitem{conn2009introduction}
A.~Conn, K.~Scheinberg, and L.~Vicente.
\newblock {\em Introduction to derivative-free optimization}.
\newblock SIAM, 2009.

\bibitem{MR3935094}
I.~Coope and R.~Tappenden.
\newblock Efficient calculation of regular simplex gradients.
\newblock {\em Computational Optimization and Applications}, 72(3):561--588,
  2019.

\bibitem{MR2457346}
A.~Cust\'{o}dio, J.~Dennis, and L.~Vicente.
\newblock Using simplex gradients of nonsmooth functions in direct search
  methods.
\newblock {\em IMA Journal of Numerical Analysis}, 28(4):770--784, 2008.

\bibitem{Gratton2017b}
S.~Gratton, C.~Royer, and L.~Vicente.
\newblock A decoupled first/second-order steps technique for nonconvex
  nonlinear unconstrained optimization with improved complexity bounds.
\newblock {\em Mathematical Programming}, 2019.
\newblock To appear.

\bibitem{Gratton2017direct}
S.~Gratton, C.~Royer, L.~Vicente, and Z.~Zhang.
\newblock Direct search based on probabilistic feasible descent for bound and
  linearly constrained problems.
\newblock {\em Computational Optimization and Applications}, 72(3):525--559,
  2019.

\bibitem{MR4074016}
W.~Hare and G.~Jarry-Bolduc.
\newblock Calculus identities for generalized simplex gradients: rules and
  applications.
\newblock {\em SIAM Journal on Optimization}, 30(1):853--884, 2020.

\bibitem{hare2020error}
W.~Hare, G.~Jarry-Bolduc, and C.~Planiden.
\newblock Error bounds for overdetermined and underdetermined generalized
  centred simplex gradients.
\newblock {\em arXiv preprint arXiv:2006.00742}, 2020.

\bibitem{hare2013derivative}
W.~Hare and J.~Nutini.
\newblock A derivative-free approximate gradient sampling algorithm for finite
  minimax problems.
\newblock {\em Computational Optimization and Applications}, 56(1):1--38, 2013.

\bibitem{hare2016proximal}
W.~Hare, C.~Sagastiz{\'a}bal, and M.~Solodov.
\newblock A proximal bundle method for nonsmooth nonconvex functions with
  inexact information.
\newblock {\em Computational Optimization and Applications}, 63(1):1--28, 2016.

\bibitem{hare2020discussion}
Warren Hare.
\newblock A discussion on variational analysis in derivative-free optimization.
\newblock {\em Set-Valued and Variational Analysis}, pages 1--17, 2020.

\bibitem{hare2019derivative}
Warren Hare, Chayne Planiden, and Claudia Sagastiz{\'a}bal.
\newblock A derivative-free {VU}-algorithm for convex finite-max problems.
\newblock {\em Optimization Methods and Software}, 35(3):521--559, 2020.

\bibitem{larson2016manifold}
J.~Larson, M.~Menickelly, and S.~Wild.
\newblock Manifold sampling for $l_1$ nonconvex optimization.
\newblock {\em SIAM Journal on Optimization}, 26(4):2540--2563, 2016.

\bibitem{liuzzi2019trust}
G.~Liuzzi, S.~Lucidi, F.~Rinaldi, and L.~Vicente.
\newblock Trust-region methods for the derivative-free optimization of
  nonsmooth black-box functions.
\newblock {\em SIAM Journal on Optimization}, 29(4):3012--3035, 2019.

\bibitem{Maggiar2018}
A.~Maggiar, A.~W\"{a}chter, I.~Dolinskaya, and J.~Staum.
\newblock A derivative-free trust-region algorithm for the optimization of
  functions smoothed via {Gaussian} convolution using adaptive multiple
  importance sampling.
\newblock {\em SIAM Journal on Optimization}, 28(2):1478--1507, 2018.

\bibitem{MMSMW2017}
M.~Menickelly and S.~Wild.
\newblock Derivative-free robust optimization by outer approximations.
\newblock {\em Mathematical Programming}, 2019.
\newblock To appear.

\bibitem{powell2003trust}
M.~Powell.
\newblock On trust region methods for unconstrained minimization without
  derivatives.
\newblock {\em Mathematical Programming}, 97(3):605--623, 2003.

\bibitem{powell2009bobyqa}
M.~Powell.
\newblock The {BOBYQA} algorithm for bound constrained optimization without
  derivatives.
\newblock {\em Cambridge NA Report NA2009/06, University of Cambridge,
  Cambridge}, pages 26--46, 2009.

\bibitem{MR3348587}
R.~Regis.
\newblock The calculus of simplex gradients.
\newblock {\em Optimization Letters}, 9(5):845--865, 2015.

\bibitem{rockwets}
R.~Rockafellar and R.~Wets.
\newblock {\em Variational analysis}.
\newblock Grundlehren der Mathematischen Wissenschaften [Fundamental Principles
  of Mathematical Sciences]. Springer-Verlag, Berlin, 1998.

\bibitem{shashaani2018astro}
S.~Shashaani, F.~Hashemi, and R.~Pasupathy.
\newblock {ASTRO-DF}: {A} class of adaptive sampling trust-region algorithms
  for derivative-free stochastic optimization.
\newblock {\em SIAM Journal on Optimization}, 28(4):3145--3176, 2018.

\bibitem{verderio2017construction}
A.~Verd{\'e}rio, E.~Karas, L.~Pedroso, and K.~Scheinberg.
\newblock On the construction of quadratic models for derivative-free
  trust-region algorithms.
\newblock {\em EURO Journal on Computational Optimization}, 5(4):501--527,
  2017.

\bibitem{wild2013global}
S.~Wild and C.~Shoemaker.
\newblock Global convergence of radial basis function trust-region algorithms
  for derivative-free optimization.
\newblock {\em SIAM REVIEW}, 55(2):349--371, 2013.

\end{thebibliography}
\end{document}